\documentclass[12pt,reqno]{amsart}

\usepackage{graphicx}
\usepackage[top=20mm, bottom=20mm, left=30mm, right=30mm]{geometry}
\usepackage[colorlinks=true,citecolor=black,linkcolor=black,urlcolor=magenta]{hyperref}

\usepackage{fancyvrb}
\usepackage{ulem} 
\usepackage[T1]{fontenc} 
\usepackage{pbsi} 
\usepackage{url} 

\usepackage{local-math-commands}

\usepackage{diagbox}
\newcommand{\trianglenk}[2]{$\diagbox{#1}{#2}$}

\title[Generating Function Transformations]{Zeta Series 
       Generating Function Transformations Related to 
       Polylogarithm Functions and the $k$--Order Harmonic Numbers}  

\author{Maxie D. Schmidt \\ 
        \href{mailto:maxieds@gmail.com}{maxieds@gmail.com}}         

\allowdisplaybreaks

\begin{document}

\maketitle

\begin{abstract} 
We define a new class of generating function transformations related to 
polylogarithm functions, Dirichlet series, and Euler sums. 
These transformations are given by an infinite sum over the $j^{th}$ 
derivatives of a sequence generating function and sets of 
generalized coefficients satisfying a non--triangular recurrence relation 
in two variables. The generalized transformation coefficients share a 
number of analogous properties with the Stirling numbers of the second kind 
and the known harmonic number expansions of the 
unsigned Stirling numbers of the first kind. 

We prove a number of properties of the generalized coefficients which 
lead to new recurrence relations and summation identities for the 
$k$--order harmonic number sequences. 
Other applications of the generating function transformations we 
define in the article include new series expansions for the 
polylogarithm function, the alternating zeta function, and the 
Fourier series for the periodic Bernoulli polynomials. 
We conclude the article with a discussion of several specific new 
``almost'' linear recurrence relations between the integer--order harmonic 
numbers and the generalized transformation coefficients, which 
provide new applications to studying the limiting behavior of the 
zeta function constants, $\zeta(k)$, at integers $k \geq 2$. 
\end{abstract}

\section{Introduction}

The \textit{Stirling numbers of the second kind}, 
$\gkpSII{n}{k}$, are defined for $n,k \geq 0$ by the 
triangular recurrence relation  \citep[\Section 6.1]{GKP}\footnote{ 
     The notation for \textit{Iverson's convention}, 
     $\Iverson{n = k} = \delta_{n,k}$, is consistent with its usage in 
     \citep{GKP}. 
}
\begin{equation} 
\label{eqn_S2_rdef} 
\gkpSII{n}{k} = k \gkpSII{n-1}{k} + \gkpSII{n-1}{k-1} + \Iverson{n = k = 0}.
\end{equation} 
It is also known, or at least straightforward to prove by induction, that 
for any sequence, $\langle g_n \rangle$, whose formal ordinary power series 
(OGF) is denoted by $G(z)$, and natural numbers $m \geq 1$, 
we have a generating function transformation of the form 
\citep[\cf \S 7.4]{GKP}\footnote{ 
     Variants of \eqref{eqn_S2NPowCoeffGFTransform} can be found in 
     \citep[\S 26.8(v)]{NISTHB}.
     A special case of the identity for 
     $f_n \equiv 1$ appears in \citep[eq. (7.46); \S 7.4]{GKP}. 
     Other related expansions for converting between powers of the 
     differential operator $D$ and the operator $\vartheta := zD$ are 
     known as sums involving the Stirling numbers of the first and 
     second kinds \citep[Ex. 6.13; \cf \S 6.5]{GKP}. The particular identity 
     that $[z^n]\left((zD)^k F(z)\right) = n^k f_n$ is stated 
     in \citep[\S 2.2]{GFOLOGY}. 
} 
\begin{equation}
\label{eqn_S2NPowCoeffGFTransform}
\sum_{n \geq 0} n^m g_n z^n = \sum_{j=0}^{m} \gkpSII{m}{j} z^j G^{(j)}(z).
\end{equation}
We seek to study the properties of a related set of coefficients that 
provide the corresponding semi ''inverse'' generating function transformations 
of the form 
\begin{equation} 
\label{eqn_S2StarCoeffGFTransform}
\sum_{n \geq 1} \frac{g_n}{n^k} z^n = 
     \sum_{j \geq 1} \SIIStarCf{k+2}{j} z^j G^{(j)}(z), 
\end{equation} 
for integers $k > 0$. 
We readily see that the generalized coefficients, $\SIIStarCf{k}{j}$, 
are defined by a two--index, non--triangular recurrence relation of the 
form
\begin{align}
\label{eqn_S2StarCfRecRelation}
\SIIStarCf{k}{j} & = -\frac{1}{j} \SIIStarCf{k}{j-1} + 
     \frac{1}{j} \SIIStarCf{k-1}{j} + \Iverson{k=j=1} \\ 
\notag 
     & = 
     \sum_{1 \leq m \leq j} \binom{j}{m} \frac{(-1)^{j-m}}{j! m^{k-2}}, 
\end{align}
which provides a number of new properties, identities, and sequence 
applications involving these numbers. 

We likewise obtain a number of new, interesting relations between the 
\textit{$r$--order harmonic numbers}, $H_n^{(r)} = \sum_{k=1}^{n} k^{-r}$ and 
$H_n^{(r)}(t) = \sum_{k=1}^{n} t^k / k^r$, by 
their corresponding ordinary generating functions, 
$\Li_{r}(z) / (1-z)$ and $\Li_{r}(tz) / (1-z)$, 
through our study of the generalized transformation coefficients, 
$\SIIStarCf{k}{j}$, in \eqref{eqn_S2StarCfRecRelation}. 
Most of the series expansions for special functions we define through 
\eqref{eqn_S2StarCoeffGFTransform} are new, and moreover, 
provide rational partial series approximations to the infinite series in $z$. 
Section \ref{subSection_InitProps_OGFs_S1Relations_of_GenCoeffs-proof} 
provides the details to a combinatorial proof of the 
zeta series transformations defined by 
\eqref{eqn_S2StarCoeffGFTransform} and \eqref{eqn_S2StarCfRecRelation}. 

\subsection*{Examples} 

The Dirichlet--generating--function--like series defined formally by 
\eqref{eqn_S2StarCoeffGFTransform} can be approximated up to any finite 
order $u \geq 1$ by the terms of typically rational truncated Taylor series. 
We cite a few notable examples of these truncated ordinary generating 
functions in the following equations where $k \in \mathbb{N}$, 
$a, b, r, t \in \mathbb{R}$, and $\omega_a = \exp(2\pi\imath / a)$ denotes the 
\textit{primitive $a^{th}$ root of unity}:
\begin{subequations}
\begin{align}
\sum_{1 \leq n \leq u} \frac{z^n}{n^k} & = 
     [w^u]\left(\sum_{j=1}^u \SIIStarCf{k+2}{j} 
     \frac{(wz)^j j!}{(1-wz)^{j+1} (1-w)}\right) \\ 
\sum_{1 \leq n \leq u} \frac{z^n}{n^k n!} & =  
     [w^u]\left(\sum_{j=1}^u \SIIStarCf{k+2}{j} 
     \frac{(wz)^j e^{wz}}{(1-w)}\right)\\ 
\sum_{1 \leq n \leq u} H_n^{(k)} z^n & =  
     [w^u]\left(\sum_{j=1}^u \SIIStarCf{k+2}{j} 
     \frac{(wz)^j j!}{(1-wz)^{j+2} (1-w)}\right) \\ 
\sum_{1 \leq n \leq u} \left(\sum_{m=1}^{n} \frac{t^m}{m^k}\right) z^n & =  
     [w^u]\left(\sum_{j=1}^u \SIIStarCf{k+2}{j} 
     \frac{(wtz)^j j!}{(1-wtz)^{j+1} (1-wz) (1-w)}\right) \\ 
\sum_{1 \leq n \leq u} \left(\sum_{m=1}^{n} \frac{r^m}{m^k m!}\right) z^n & = 
     [w^u]\left(\sum_{j=1}^u \SIIStarCf{k+2}{j} 
     \frac{(wrz)^j e^{wrz}}{(1-wz) (1-w)}\right) \\ 
\label{eqn_HNumEGFFromOGFIntTransform}
\sum_{1 \leq n \leq u} \frac{H_n^{(k)}}{n!} z^n & = 
     [w^u]\left(\sum_{j=1}^u \SIIStarCf{k+2}{j} 
     \frac{(wz)^j}{(j+1)}\frac{e^{wz}(j+1+wz)}{(1-w)}\right) \\ 
\sum_{1 \leq n \leq u} \frac{z^n}{(an+b)^{s}} + 
     \frac{\Iverson{b > 0}}{b^s} & = 
     [t^{au+b}]\left(\sum_{m=0}^{a-1} \sum_{j=1}^{au+b} \SIIStarCf{s+2}{j} 
     \frac{\omega_a^{-mb} z^{b/a} (\left(t z^{1/a}\right)^j j!}{a 
     \left((1-\omega_a^m t z^{1/a}\right)^{j+1} (1-t)}\right).
\end{align} 
\end{subequations}
     These expansions follow easily as consequences of a few generating 
     function operations and transformation results. First, 
     for any fixed scalar $t$, 
     the $j^{th}$ derivative of the geometric series satisfies 
     \begin{equation*} 
         \frac{d^{(j)}}{{dz}^{(j)}}\Biggl[\frac{1}{(1-tz)}\Biggr] = 
         \frac{t^j j!}{(1-tz)^{j+1}}. 
     \end{equation*} 
     We also have an integral transform that converts the ordinary 
     generating function, $F(z)$, of any sequence into its corresponding 
     exponential generating function, $\widehat{F}(z)$, according to 
     \citep[p.\ 566]{GKP} 
     \begin{equation*} 
     \widehat{F}(z) = \frac{1}{2\pi} \int_{-\pi}^{+\pi} F\left( 
          z e^{-\imath\vartheta}\right) e^{e^{\imath\vartheta}} d\vartheta. 
     \end{equation*} 
     This integral transformation together with induction and 
     integration by parts shows that 
     \begin{equation*} 
     \frac{1}{2\pi} \int_{-\pi}^{+\pi} \frac{\left(wz 
          e^{-\imath\vartheta}\right)^{j}}{\left(1-wz e^{-\imath\vartheta} 
          \right)^{j+2}} e^{e^{\imath\vartheta}} d\vartheta = 
          \frac{(wz)^j e^{wz}}{(j+1)!}\left(j+1+wz\right), 
     \end{equation*} 
     which implies the second to last expansion in 
     \eqref{eqn_HNumEGFFromOGFIntTransform}. 
     Lastly, there is a known ``\textit{series multisection}'' 
     generating function transformation over 
     arithmetic progressions of a sequence for integers 
     $a > 1, b \geq 0$ of the form \citep[\S 1.2.9]{TAOCPV1}
     \begin{equation*}
     \sum_{n \geq 0} f_{an+b} z^{an+b} = 
          \sum_{0 \leq m < a} 
          \frac{\omega_a^{-mr}}{a} F\left(\omega_a^m z\right)
     \end{equation*}
Since the geometric series ordinary generating function, and its $j^{th}$ 
derivatives, are always rational, we may also give similar statements 
about the partial sums of the \textit{Euler sum} generating functions 
of the forms studied in 
\citep{FLAJOLETESUMS,EXPLICIT-EVAL-ESUMS,ECV2}. 

\subsection*{Comparison to Known Series} 

For comparison, we summarize a pair of known series identities for the 
\textit{polylogarithm function}, $\Li_s(z)$, and the 
\textit{modified Hurwitz zeta function,} 
$\Phi(z, s, \alpha, \beta) = \sum_{n \geq 1} z^n / (\alpha n + \beta)^s = 
 \alpha^{-s} z \Phi(z, s, \beta / \alpha + 1)$, as follows 
\citep[\S 25.12(ii), 25.14]{NISTHB} 
\citep[eq. (6); Thm. 2.1; \S 2]{DOUBLEINTS-LTRANS}: 
\begin{align} 
\label{eqn_Lisz_ModHZetaFn_KnownSeriesExps}
\Li_s(z) & = \sum_{k \geq 0} \left(-\frac{z}{1-z}\right)^{k+1} 
     \sum_{0 \leq m \leq k} \binom{k}{m} \frac{(-1)^{m+1}}{(m+1)^{s}} \\ 
\notag
\Phi(z, s, \alpha, \beta) & =\sum_{k \geq 0} \left(-\frac{z}{1-z}\right)^{k+1} 
     \sum_{0 \leq m \leq k} \binom{k}{m} 
     \frac{(-1)^{m+1}}{(\alpha m+\alpha+\beta)^{s}}. 
\end{align} 
The new generalized coefficients, $\SIIStarCf{k}{j}$, 
defining the transformations in \eqref{eqn_S2StarCoeffGFTransform} 
satisfy several key properties and generating functions analogous to 
those of the 
Stirling numbers of the second kind, and are closely--related to the 
known harmonic number expansions of the unsigned triangle of the 
\textit{Stirling numbers of the first kind} 
\citep{STIRESUMS,MULTIFACTJIS}. 
We explore several initial properties and relations of these 
coefficients in the next section. 

\section{Initial Properties, Ordinary Generating Functions, and 
         Relations to the Stirling Numbers} 
\label{Section_InitProps_OGFs_S1Relations_of_GenCoeffs} 

\subsection{Proof of the Transformation Identity in 
            \textbf{\eqref{eqn_S2StarCoeffGFTransform}}} 
\label{subSection_InitProps_OGFs_S1Relations_of_GenCoeffs-proof} 

We first prove that the recurrence relation in \eqref{eqn_S2StarCfRecRelation} 
holds for the generalized transformation coefficients in 
\eqref{eqn_S2StarCoeffGFTransform}, which is then used to extrapolate new 
results providing summation and harmonic number identities for these 
sequences. 
\begin{proof}[Proof of \eqref{eqn_S2StarCfRecRelation}]
The proof proceeds by an inductive argument similar to the proof that can 
be given from \eqref{eqn_S2_rdef} for the 
generating function transformations involving the Stirling numbers of the 
second kind cited in the introduction. 
We first observe that 
\begin{equation*}
\sum_{n \geq 1} \frac{g_n}{n^k} z^n = 
     \sum_{n \geq 1} \frac{(n \cdot g_n)}{n^{k+1}} z^n 
\end{equation*} 
for all $k \in \mathbb{N}$. 
Since the ordinary generating function for the sequence, 
$\langle n g_n \rangle$, is given by $z G^{\prime}(z)$, and the $j^{th}$ 
derivative of $z G^{\prime}(z)$ is $j G^{(j)}(z) + z G^{(j+1)}(z)$, 
we may write that 
\begin{align*} 
\sum_{j \geq 1} \SIIStarCf{k+2}{j} z^j G^{(j)}(z) & = 
     \sum_{j \geq 1} \SIIStarCf{k+3}{j} z^j \left( 
     j G^{(j)}(z) + z G^{(j+1)}(z) \right) \\ 
     & = 
     \sum_{j \geq 2} z^j \left(j \SIIStarCf{k+3}{j} + \SIIStarCf{k+3}{j-1} 
     \right) + \Iverson{k = j = 1}. 
\end{align*} 
We then conclude that the non--triangular recurrence relation in 
\eqref{eqn_S2StarCfRecRelation} defines the series transformation 
coefficients in \eqref{eqn_S2StarCoeffGFTransform}. 
\end{proof} 

\subsection{Exact Expansions of the 
            Transformation Coefficients} 

\begin{table}[t] 
\renewcommand{\arraystretch}{1.25} 
\setlength{\arraycolsep}{3pt} 
\begin{equation*} 
\begin{array}{|c|lllllllll|} \hline 
\trianglenk{k}{j} & 0 & 1 & 2 & 3 & 4 & 5 & 6 & 7 & 8 \\ \hline 
0 & 1 & 0 & 0 & 0 & 0 & 0 & 0 & 0 & 0 \\ 
1 & 0 & 1 & 0 & 0 & 0 & 0 & 0 & 0 & 0 \\ 
2 & 0 & 1 & -\frac{1}{2} & \frac{1}{6} & -\frac{1}{24} & \frac{1}{120} & 
    -\frac{1}{720} & \frac{1}{5040} & -\frac{1}{40320} \\ 
3 & 0 & 1 & -\frac{3}{4} & \frac{11}{36} & -\frac{25}{288} & \frac{137}{7200} & 
        -\frac{49}{14400} & \frac{121}{235200} & -\frac{761}{11289600} \\ 
4 & 0 & 1 & -\frac{7}{8} & \frac{85}{216} & -\frac{415}{3456} & 
         \frac{12019}{432000} & -\frac{13489}{2592000} & 
         \frac{726301}{889056000} & -\frac{3144919}{28449792000} \\ 
5 & 0 & 1 & -\frac{15}{16} & \frac{575}{1296} & -\frac{5845}{41472} & 
        \frac{874853}{25920000} & -\frac{336581}{51840000} & 
        \frac{129973303}{124467840000} & -\frac{1149858589}{7965941760000} \\ 
6 & 0 & 1 & -\frac{31}{32} & \frac{3661}{7776} & -\frac{76111}{497664} & 
        \frac{58067611}{1555200000} & -\frac{68165041}{9331200000} & 
        \frac{187059457981}{156829478400000} &
        -\frac{3355156783231}{20074173235200000} \\ \hline 
\end{array} 
\end{equation*} 
\caption{A Table of the Generalized Coefficients $\SIIStarCf{k}{j}$} 
\label{table_TableOfGenCoeffs}
\end{table} 

\begin{table}[ht] 
\renewcommand{\arraystretch}{1.25} 
\setlength{\arraycolsep}{4pt} 
\begin{equation*} 
\begin{array}{|c|lllllllll|} \hline 
\trianglenk{k}{j} & 0 & 1 & 2 & 3 & 4 & 5 & 6 & 7 & 8 \\ \hline 
0 & 1 & 0 & 0 & 0 & 0 & 0 & 0 & 0 & 0 \\ 
1 & 0 & 1 & 0 & 0 & 0 & 0 & 0 & 0 & 0 \\
2 & 0 & 1 & 1 & 1 & 1 & 1 & 1 & 1 & 1 \\
3 & 0 & 1 & \frac{3}{2} & \frac{11}{6} & \frac{25}{12} & \frac{137}{60} & 
     \frac{49}{20} & \frac{363}{140} & \frac{761}{280} \\
4 & 0 & 1 & \frac{7}{4} & \frac{85}{36} & \frac{415}{144} & \frac{12019}{3600} & 
     \frac{13489}{3600} & \frac{726301}{176400} & \frac{3144919}{705600} \\
5 & 0 & 1 & \frac{15}{8} & \frac{575}{216} & \frac{5845}{1728} & 
     \frac{874853}{216000} & \frac{336581}{72000} & 
     \frac{129973303}{24696000} & \frac{1149858589}{197568000} \\
6 & 0 & 1 & \frac{31}{16} & \frac{3661}{1296} & \frac{76111}{20736} & 
     \frac{58067611}{12960000} & \frac{68165041}{12960000} & 
     \frac{187059457981}{31116960000} &
     \frac{3355156783231}{497871360000} \\ \hline 
\end{array}
\end{equation*} 
\caption{A Table of the Scaled Coefficients 
         $\SIIStarCf{k}{j} \times (-1)^{j-1} \cdot j!$} 
\label{table_TableOfGenCoeffs_scaled_v2}
\end{table} 

The recurrence relation in \eqref{eqn_S2StarCfRecRelation} 
leads us to compute the first few terms of these sequences given in 
Table \ref{table_TableOfGenCoeffs} and 
Table \ref{table_TableOfGenCoeffs_scaled_v2}. 
We are also able to compute the next explicit formulas for 
variable $k$ and fixed small special cases of $j \geq 1$ as follows: 
\begin{align} 
\label{prop_Several_SpCase_Formulas_inj_GenCoeffs} 
\SIIStarCf{k}{1} & = \phantom{-} \Iverson{k \geq 1} \\ 
\notag 
\SIIStarCf{k}{2} & = -\left(1-2^{1-k}\right) \Iverson{k \geq 2} \\ 
\notag
\SIIStarCf{k}{3} & = \phantom{-} \frac{1}{2}\left(1-2 \cdot 2^{1-k} + 
     3^{1-k}\right) \Iverson{k \geq 2} \\ 
\notag
\SIIStarCf{k}{4} & = -\frac{1}{6}\left(1-3 \cdot 2^{1-k} + 
     3 \cdot 3^{1-k}-4^{1-k}\right) \Iverson{k \geq 2} \\ 
\notag
\SIIStarCf{k}{5} & = \phantom{-} \frac{1}{24}\left(1-4 \cdot 2^{1-k} + 
     6 \cdot 3^{1-k}-4\cdot 4^{1-k}+5^{1-k}\right) \Iverson{k \geq 2}. 
\end{align} 
The inductive proof of the full explicit summation formula expanded in 
\eqref{eqn_S2Star_coeff_closed-form_sum_ident} 
we obtain from the special cases above 
is left as an exercise to the reader. 
We compare this formula to the analogous identity for the 
Stirling numbers of the second kind as follows 
\citep[\S 26.8]{NISTHB}: 
\begin{align} 
\label{eqn_S2_coeff_closed-form_sum_ident} 
\gkpSII{k}{j} & = \sum_{m=1}^{j} \binom{j}{m} 
     \frac{(-1)^{j-m} m^{k}}{j!} \\ 
\label{eqn_S2Star_coeff_closed-form_sum_ident} 
\SIIStarCf{k+2}{j} & = \sum_{m=1}^{j} \binom{j}{m} 
     \frac{(-1)^{j-m}}{j!\ m^{k}} \phantom{m^{k}}. 
\end{align} 
For further comparison, observe that the forms of both 
\eqref{eqn_S2_coeff_closed-form_sum_ident} and 
\eqref{eqn_S2Star_coeff_closed-form_sum_ident} lead to the following 
similar pair of ordinary generating functions in $z$ with respect to the 
upper index $k > 0$ and fixed $j \in \mathbb{Z}^{+}$ 
\citep[\Section 26.8(ii)]{NISTHB}: 
\begin{align} 
\label{eqn_S2_OGFjz_GF_ident} 
\sum_{k=0}^{\infty} \gkpSII{k}{j} z^k & = 
     \frac{z^j}{(1-z)(1-2z) \cdots (1-jz)} \\ 
\notag 
\sum_{k=0}^{\infty} \SIIStarCf{k}{j} z^k & = 
     \left(\frac{(-1)^{j+1} z^2}{(1-z)(2-z) \cdots (j-z)}\right) 
     \Iverson{j \geq 2} + 
     \left(\frac{z}{(1-z)}\right) \Iverson{j = 1}. 
\end{align} 
We also compare the generalized coefficient formula in 
\eqref{eqn_S2Star_coeff_closed-form_sum_ident} and 
its generating function representation in 
\eqref{eqn_S2_OGFjz_GF_ident} 
to the \textit{N\"{o}rlund--Rice integral} of a meromorphic function $f$ 
over a suitable contour given by \citep{FS-NORLUND-RICE-INTS}
\begin{equation*}
\sum_{1 \leq m \leq j} \binom{j}{m} (-1)^{j-m} f(m) = 
     \frac{j!}{2\pi\imath} \oint 
     \frac{f(z)}{z(z-1)(z-2) \cdots (z-j)} dz. 
\end{equation*}

\begin{cor}[Harmonic Number Formulas] 
\label{cor_Summary_S2Star_Hnr_HarmonicNumber_exps_k_in_2to6} 
For $j \in \mathbb{N}$, the following formulas provide 
expansions of the coefficients from 
\eqref{eqn_S2StarCfRecRelation} and 
\eqref{eqn_S2Star_coeff_closed-form_sum_ident} at the 
fixed cases of $k \in [2, 6] \subseteq \mathbb{N}$: 
\begin{align} 
\notag 
\SIIStarCf{2}{j} & = \frac{(-1)^{j-1}}{j!} \\ 
\notag 
\SIIStarCf{3}{j} & = \frac{(-1)^{j-1}}{j!}\ H_j \\ 
\notag 
\SIIStarCf{4}{j} & = \frac{(-1)^{j-1}}{2 j!}\left(H_j^2 + H_j^{(2)}\right) \\ 
\notag 
\SIIStarCf{5}{j} & = \frac{(-1)^{j-1}}{6 j!}\left(H_j^3 + 3 H_j H_j^{(2)} + 
     2 H_j^{(3)}\right) \\ 
\label{eqn_S2Star_Hnr_HarmonicNumber_exps_k_in_2to6} 
\SIIStarCf{6}{j} & = \frac{(-1)^{j-1}}{24 j!}\left( 
     H_j^4 + 6 H_j^2 H_j^{(2)} + 3 \left(H_j^{(2)}\right)^2 + 
     8 H_j H_j^{(3)} + 6 H_j^{(4)}\right).
\end{align} 
\end{cor}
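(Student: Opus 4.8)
The plan is to reduce the whole statement to a single closed form valid for all $k$ and then specialize $k$. From the rational generating function in \eqref{eqn_S2_OGFjz_GF_ident}, for $j \geq 2$ we have
\begin{equation*}
\sum_{k \geq 0} \SIIStarCf{k}{j} z^k = \frac{(-1)^{j-1} z^2}{(1-z)(2-z) \cdots (j-z)} = \frac{(-1)^{j-1} z^2}{j!} \prod_{i=1}^{j} \frac{1}{1 - z/i},
\end{equation*}
while the $j = 1$ case contributes the $\Iverson{j=1}$ term $z/(1-z)$ there. Extracting $[z^{k+2}]$ and invoking the classical generating identity $\prod_{i=1}^{j}(1-x_i t)^{-1} = \sum_{k\geq 0} h_k(x_1,\ldots,x_j)\, t^k$ for the complete homogeneous symmetric polynomials $h_k$, this yields, for every $k \geq 0$ and $j \geq 1$,
\begin{equation*}
\SIIStarCf{k+2}{j} = \frac{(-1)^{j-1}}{j!}\, h_k\!\left(1, \tfrac12, \ldots, \tfrac1j\right).
\end{equation*}
Equivalently, a partial-fraction expansion of the same rational function reproduces the finite-difference closed form \eqref{eqn_S2Star_coeff_closed-form_sum_ident} and shows in passing that $\sum_{m=1}^{j}\binom{j}{m}(-1)^{m-1} m^{-k} = h_k(1,\tfrac12,\ldots,\tfrac1j)$, so one may start from \eqref{eqn_S2Star_coeff_closed-form_sum_ident} instead of \eqref{eqn_S2_OGFjz_GF_ident} with no loss.

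It then remains to rewrite $h_k$ in terms of the power sums $p_r(1,\tfrac12,\ldots,\tfrac1j) = \sum_{i=1}^{j} i^{-r} = H_j^{(r)}$ for $k = 0,1,2,3,4$. Newton's identities (equivalently the recursion $k\, h_k = \sum_{r=1}^{k} h_{k-r}\, p_r$) give $h_0 = 1$, $h_1 = p_1$, $2h_2 = p_1^2 + p_2$, $6h_3 = p_1^3 + 3 p_1 p_2 + 2 p_3$, and $24 h_4 = p_1^4 + 6 p_1^2 p_2 + 3 p_2^2 + 8 p_1 p_3 + 6 p_4$. Substituting $p_r \mapsto H_j^{(r)}$ and multiplying through by $(-1)^{j-1}/j!$ produces exactly the five expansions in \eqref{eqn_S2Star_Hnr_HarmonicNumber_exps_k_in_2to6}.

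I expect the only real bookkeeping to be the partial-fraction/residue computation, together with its sign tracking, needed to move cleanly between \eqref{eqn_S2_OGFjz_GF_ident}, the closed form \eqref{eqn_S2Star_coeff_closed-form_sum_ident}, and the symmetric-function form; once $\SIIStarCf{k+2}{j} = \frac{(-1)^{j-1}}{j!} h_k$ is in hand, everything after it is a finite, mechanical application of Newton's identities. As a self-contained alternative that avoids symmetric functions altogether, one can induct on $j$ directly from the recurrence \eqref{eqn_S2StarCfRecRelation}: taking $\SIIStarCf{2}{j} = (-1)^{j-1}/j!$ as the base layer and assuming the claimed formulas for $\SIIStarCf{k-1}{\cdot}$ and for $\SIIStarCf{k}{j-1}$, the step $\SIIStarCf{k}{j} = -\tfrac1j \SIIStarCf{k}{j-1} + \tfrac1j \SIIStarCf{k-1}{j}$ collapses in each of the cases $k = 3,4,5,6$ to the telescoping identity $H_{j-1}^{(r)} + j^{-r} = H_j^{(r)}$, in exact parallel with the Stirling-number induction already used to prove \eqref{eqn_S2StarCfRecRelation}.
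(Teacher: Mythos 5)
Your proposal is correct, and it is a genuinely different — and substantially more self-contained — argument than the one in the paper. The paper's proof of Corollary \ref{cor_Summary_S2Star_Hnr_HarmonicNumber_exps_k_in_2to6} is essentially a computer-algebra verification (\emph{Mathematica}/\texttt{Sigma}) of the individual cases $k=2,\ldots,6$, supplemented by the ``heuristic'' recurrence \eqref{eqn_S2StarCf_HNum_Heuristic_formula_stmt} whose justification is outsourced to a Bell-polynomial argument in a cited reference. You instead prove a single closed form valid for all $k$, namely $\SIIStarCf{k+2}{j} = \frac{(-1)^{j-1}}{j!}\, h_k\bigl(1,\tfrac12,\ldots,\tfrac1j\bigr)$, by rewriting the rational generating function in \eqref{eqn_S2_OGFjz_GF_ident} as $\frac{(-1)^{j-1}z^2}{j!}\prod_{i=1}^{j}(1-z/i)^{-1}$ (or, equivalently, by the partial-fraction identity $\sum_{m=1}^{j}\binom{j}{m}(-1)^{m-1}m^{-k}=h_k(1,\ldots,1/j)$ applied to \eqref{eqn_S2Star_coeff_closed-form_sum_ident}), and then specialize via Newton's identities; I checked the $h_3$ and $h_4$ expansions against the stated formulas and they agree. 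What your route buys is considerable: the identification with complete homogeneous symmetric polynomials turns the paper's heuristic formula \eqref{eqn_S2StarCf_HNum_Heuristic_formula_stmt} into exactly the Newton identity $k\,h_k=\sum_{r=1}^{k}p_r h_{k-r}$ with $p_r\mapsto H_j^{(r)}$, so your argument proves that recurrence for all $k\geq 1$ rather than merely citing it, and it generates the $k\geq 7$ cases mechanically. Two minor remarks: the corollary's formulas (and hence your derivation) hold for $j\geq 1$ but not at $j=0$, where $\SIIStarCf{k}{0}=0$ for $k\geq 1$ while $(-1)^{j-1}/j!$ gives $-1$ — this is a defect of the statement, not of your proof; and since the paper presents \eqref{eqn_S2_OGFjz_GF_ident} as a consequence of \eqref{eqn_S2Star_coeff_closed-form_sum_ident} rather than proving it independently, your observation that one may start directly from the finite-difference closed form is the right way to keep the argument non-circular. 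Your fallback induction on $j$ from \eqref{eqn_S2StarCfRecRelation} also checks out (e.g., for $k=4$ one needs $H_j^2+H_j^{(2)}-H_{j-1}^2-H_{j-1}^{(2)}=2H_j/j$, which follows from $H_j^{(r)}=H_{j-1}^{(r)}+j^{-r}$), though it is slightly more than pure telescoping.
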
 
\begin{proof} 
Both the \textit{Mathematica} software suite and the package 
\texttt{Sigma}\footnote{ 
     \url{https://www.risc.jku.at/research/combinat/software/Sigma/}
} 
are able to obtain these formulas for small special cases 
\citep{SYMB-SUM-COMB-SIGMAPKGDOCS}. 
Larger special cases of $k \geq 7$ are easiest to compute by first 
generating a recurrence corresponding to the sum in 
\eqref{eqn_S2Star_coeff_closed-form_sum_ident}, 
and then solving the resulting non--linear recurrence relation 
with the \texttt{Sigma} package routines. 

A more general heuristic harmonic--number--based 
recurrence formula that generates 
these expansions for all $k \geq 2$ is suggested along the 
lines of the analogous formulas for the Stirling numbers of the first kind 
in the references as \citep[\S 2]{STIRESUMS} 
\begin{equation} 
\label{eqn_S2StarCf_HNum_Heuristic_formula_stmt} 
\SIIStarCf{k+2}{j} = \sum_{0 \leq m < k} \frac{H_j^{(m+1)}}{k} 
     \SIIStarCf{k+1-m}{j} + \left(\frac{(-1)^{j-1}}{j!}\right) 
     \Iverson{k = 0}. 
\end{equation}
A short proof of the identity in 
\eqref{eqn_S2StarCf_HNum_Heuristic_formula_stmt} for all $k \geq 1$ 
is given through the exponential of a generating function for the 
$r$--order harmonic numbers and properties of the 
\emph{Bell polynomials}, $Y_n(x_1, x_2, \ldots, x_n)$, in 
\citep{CONNON-BELL-POLY-APPS}. 
\end{proof} 

\begin{table}[h] 
\begin{tabular}{|r|l|l|} \hline 
$k$ & $t_0^{(k)}(j)$ & $t_1^{(k)}(j)$ \\ \hline 
$$2 & $0$ & $2$ \\ 
$3$ & $0$ & $2 H_j$ \\ 
$4$ & $H_j^{(2)}$ & $H_j^2$ \\ 
$5$ & $H_j H_j^{(2)}$ & $\frac{1}{3}\left(H_j^3+2 H_j^{(3)}\right)$ \\ 
$6$ & $\frac{1}{2}\left(H_j^2 H_j^{(2)}+H_j^{(4)}\right)$ & 
     $\frac{1}{12}\left(H_j^4+3 \left(H_j^{(2)}\right)^2 + 
      8 H_j H_j^{(3)}\right)$ \\ 
$7$ & $\frac{1}{6}\left(H_j^3 H_j^{(2)}+2 H_j^{(2)} H_j^{(3)} + 
       3 H_j H_j^{(4)}\right)$ & 
     $\frac{1}{60}\left(H_j^5+15 H_j \left(H_j^{(2)}\right)^2+20 H_j^2 
      H_j^{(3)}+24 H_j^{(5)}\right)$ \\ \hline 
\end{tabular} 
\vskip 0.1in 
\caption{The Harmonic Number Remainder Terms in 
         \eqref{eqn_t0kj_t1kj_fn_S2S-S1-HNum_RemainderTerms_defs_v1} and 
         \eqref{eqn_t0kj_t1kj_fn_S2S-S1-HNum_RemainderTerms_defs_v2}} 
\label{table_t01kj_HNum_Remainder_Terms_formulas} 
\end{table} 

\begin{example}[Comparison of the Formulas] 
The similarities between the harmonic number expansions of the 
Stirling numbers of the first kind and the related expansions of
\eqref{eqn_S2Star_coeff_closed-form_sum_ident} given in 
\eqref{eqn_S2Star_Hnr_HarmonicNumber_exps_k_in_2to6} 
suggest another interpretation for the generalized 
coefficient representations. 
More precisely, for $k \in \mathbb{N}$, let the respective functions 
$t_0^{(k)}(j)$ and $t_1^{(k)}(j)$ denote the 
remainder terms in the forms of the coefficients in 
\eqref{eqn_S2Star_coeff_closed-form_sum_ident} 
defined by the following pair of equations:
\begin{align} 
\label{eqn_t0kj_t1kj_fn_S2S-S1-HNum_RemainderTerms_defs_v1} 
t_0^{(k+2)}(j) & = \SIIStarCf{k+2}{j} \cdot (-1)^{j-1} \cdot j! - 
     \gkpSI{j+1}{k+1} \frac{1}{j!} \\ 
\notag 
t_1^{(k+2)}(j) & = \SIIStarCf{k+2}{j} \cdot (-1)^{j-1} \cdot j! + 
     \gkpSI{j+1}{k+1} \frac{1}{j!}. 
\end{align} 
The harmonic number formulas for these generalized coefficient sums 
are recovered from the remainder terms in 
\eqref{eqn_t0kj_t1kj_fn_S2S-S1-HNum_RemainderTerms_defs_v1}
and the Stirling number expansions as 
\begin{align} 
\label{eqn_t0kj_t1kj_fn_S2S-S1-HNum_RemainderTerms_defs_v2} 
\SIIStarCf{k+2}{j} & = \frac{(-1)^{j-1}}{j!} \left( 
     \gkpSI{j+1}{k+1} \frac{1}{j!} + t_0^{(k+2)}(j) 
     \right) \\ 
\notag 
\SIIStarCf{k+2}{j} & = \frac{(-1)^{j\phantom{-1}}}{j!} \left( 
     \gkpSI{j+1}{k+1} \frac{1}{j!} - t_1^{(k+2)}(j) 
     \right). 
\end{align} 
The heuristic method identified in the proof of 
Corollary \ref{cor_Summary_S2Star_Hnr_HarmonicNumber_exps_k_in_2to6}
allows for the form of both functions to be computed for the 
next several special case formulas extending the expansions 
cited in the corollary. 
For comparison, a table of the first several of these 
remainder functions is provided in 
Table \ref{table_t01kj_HNum_Remainder_Terms_formulas} for 
$2 \leq k \leq 7$. 
\end{example} 

\section{Recurrence Relations and Other Identities for 
         Harmonic Number Sequences} 
\label{subSection_Apps_RecRels_Idents_HNumSeqs} 
\label{subSection_Additional_Props_Exps_of_the_GenCoeffs} 

\subsection{Finite Sum Expansions of the Transformation Coefficients} 

\begin{prop}[Integer--Order Harmonic Number Identities] 
\label{prop_Integer-Order_HNumber_idents_v1} 
For natural numbers $k \geq 1$, the generalized coefficients in 
\eqref{eqn_S2StarCfRecRelation} satisfy the following identities: 
\begin{align} 
\label{eqn_Integer-Order_HNumber_idents_v1-first_sum_v1} 
\SIIStarCf{k+2}{j} & = (j+1) \sum_{i=0}^{j-1} \frac{(-1)^{j-1-i} 
     H_{i+1}^{(k)}}{(j-1-i)! (i+2)!} \\ 
\notag 
\SIIStarCf{k+2}{j} & = (j+1) \sum_{i=0}^{j-1} 
     \frac{(-1)^{j-1-i}}{(j-1-i)!}\left( 
     \frac{H_{i+2}^{(k)}}{(i+2)!} - \frac{1}{(i+2)! (i+2)^k}\right). 
\end{align} 
\end{prop}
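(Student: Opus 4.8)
The plan is to derive both identities directly from the closed-form expansion \eqref{eqn_S2Star_coeff_closed-form_sum_ident}, i.e. from
\[
\SIIStarCf{k+2}{j} = \frac{1}{j!} \sum_{m=1}^{j} \binom{j}{m} \frac{(-1)^{j-m}}{m^{k}},
\]
by recognizing each right-hand sum as this quantity after one interchange of summation and a single binomial-coefficient identity. First I observe that it suffices to prove the first formula: since $H_{i+2}^{(k)} = H_{i+1}^{(k)} + (i+2)^{-k}$, the bracketed quantity in the second formula equals $H_{i+1}^{(k)}/(i+2)!$, so the second sum is term-by-term identical to the first and needs no separate argument.

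To prove the first identity, I substitute $H_{i+1}^{(k)} = \sum_{m=1}^{i+1} m^{-k}$ into the right-hand side and interchange the order of summation over $i$ and $m$. For a fixed $m \in \{1,\dots,j\}$ the index $i$ then runs over $m-1 \le i \le j-1$, so comparing coefficients of $m^{-k}$ reduces the claim to the purely combinatorial statement
\[
(j+1) \sum_{i=m-1}^{j-1} \frac{(-1)^{j-1-i}}{(j-1-i)!\,(i+2)!} = \frac{\binom{j}{m}(-1)^{j-m}}{j!}, \qquad 1 \le m \le j.
\]
Next I reindex by $\ell = j-1-i$, so that $\ell$ runs from $0$ to $j-m$ and $i+2 = j+1-\ell$; writing $\frac{1}{\ell!\,(j+1-\ell)!} = \frac{1}{(j+1)!}\binom{j+1}{\ell}$ turns the left-hand side into $\frac{1}{j!} \sum_{\ell=0}^{j-m} (-1)^{\ell} \binom{j+1}{\ell}$. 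Applying the partial-sum identity $\sum_{\ell=0}^{N} (-1)^{\ell}\binom{n}{\ell} = (-1)^{N}\binom{n-1}{N}$ with $n = j+1$ and $N = j-m$ gives $(-1)^{j-m}\binom{j}{j-m} = (-1)^{j-m}\binom{j}{m}$, which is exactly the needed equality, completing the proof.

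There is no genuine obstacle in this argument; the only points demanding care are the boundaries of the double-sum interchange — in particular that the inner index starts at $i = m-1$ — and the invocation of the alternating partial binomial sum, which itself follows by an easy induction on $N$ using Pascal's rule $\binom{n}{\ell} = \binom{n-1}{\ell} + \binom{n-1}{\ell-1}$. I note that one could instead establish the proposition by checking that the right-hand side satisfies the recurrence \eqref{eqn_S2StarCfRecRelation} in $j$ with the matching $k$-shift and the seed $\Iverson{k=j=1}$, together with the initial values; however, the direct interchange argument above is cleaner and avoids the bookkeeping of that route.
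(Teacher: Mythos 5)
Your proposal is correct, and it reaches the identity by a route that is recognizably dual to the paper's but mechanically different. The paper starts from the closed form \eqref{eqn_S2Star_coeff_closed-form_sum_ident} and applies summation by parts (Abel summation), writing $m^{-k} = H_m^{(k)} - H_{m-1}^{(k)}$ so that the harmonic numbers appear weighted by the differences $c_j(m) - c_j(m+1)$ of the binomial coefficients $c_j(m) = (-1)^{j-m}/\bigl(m!\,(j-m)!\bigr)$; the factor $(j+1)$ then emerges from combining those two terms over a common denominator. You instead run the computation in reverse: expand $H_{i+1}^{(k)} = \sum_{m=1}^{i+1} m^{-k}$, interchange the two sums, and collapse the inner sum with the partial alternating binomial identity $\sum_{\ell=0}^{N}(-1)^{\ell}\binom{j+1}{\ell} = (-1)^{N}\binom{j}{N}$. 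Both arguments rest on the same closed form and amount to the same double-sum rearrangement; your version isolates the telescoping step as an explicit, independently provable lemma (which makes the verification transparent), while the paper's version produces the $(j+1)$ prefactor more directly without needing to name that lemma. Your reduction of the second identity to the first via $H_{i+2}^{(k)} = H_{i+1}^{(k)} + (i+2)^{-k}$ is exactly right, and your bookkeeping of the interchange bounds ($i$ ranging over $m-1 \le i \le j-1$ for fixed $m$) and of the reindexing $\ell = j-1-i$ is correct throughout.
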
 
\begin{proof} 
Let the coefficient terms, $c_j(i)$, be defined as in the next equation. 
\begin{equation} 
\label{eqn_cji_term_def_for_proofs} 
c_j(i) := \frac{(-1)^{j-i}}{i! (j-i)!} 
\end{equation} 
It follows from \eqref{eqn_S2Star_coeff_closed-form_sum_ident} that 
\begin{align*} 
\SIIStarCf{k+2}{j} & = \sum_{i=0}^{j-1} \left(c_j(j-i) - c_j(j+1-i) 
     \right) H_{j-i}^{(k)} \\ 
   & = 
     \sum_{i=0}^{j-1} \left(\frac{(-1)^i}{i! (j-i)!} + 
     \frac{(-1)^i i}{i! (j+1-i)!}\right) H_{j-i}^{(k)} \\ 
   & = 
     (j+1) \sum_{i=0}^{j-1} \frac{(-1)^i H_{j-i}^{(k)}}{i! (j+1-i)!}. 
\end{align*} 
The identities in the proposition statement follow by interchanging the 
summation indices in the last equation. 
\end{proof} 

\begin{prop}[Formulas Involving Real--Order Harmonic Numbers] 
\label{prop_MoreGen_Real-Order_HNumIdents_fro_S2StarCoeffs} 
For $k \in \mathbb{Z}^{+}$ and real $r \geq 0$, the 
generalized coefficients in \eqref{eqn_S2StarCfRecRelation} 
satisfy the following identities: 
\begin{align*} 
\SIIStarCf{k+2}{j} & = \sum_{i=0}^{j-1} 
     \frac{(-1)^{j-1-i} H_{i+1}^{(k-r)}}{(j-1-i)! (i+1)!}\left( 
     \frac{1}{(i+1)^r} + \frac{(j-1-i)}{(i+2)^{r+1}}\right) \\ 
\SIIStarCf{k+2}{j} & = \sum_{i=0}^{j-1} 
     \frac{(-1)^{j-1-i} H_{i+1}^{(k-r)}}{(j-1-i)! (i+1)!}\left( 
     \frac{1}{(i+1)^r} - \frac{1}{(i+2)^r} + \frac{(j+1)}{(i+2)^{r+1}}\right). 
\end{align*} 
\end{prop}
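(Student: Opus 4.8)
The plan is to run the same summation-by-parts (Abel summation) argument that proves Proposition~\ref{prop_Integer-Order_HNumber_idents_v1}, except that I split the power $m^{-k}$ as $m^{-r}\cdot m^{-(k-r)}$ \emph{before} telescoping. Concretely, I would start from the closed form \eqref{eqn_S2Star_coeff_closed-form_sum_ident}, rewrite it with the coefficients $c_j(m)$ of \eqref{eqn_cji_term_def_for_proofs} as $\SIIStarCf{k+2}{j} = \sum_{m=1}^{j} c_j(m)\, m^{-k}$, and then apply the telescoping identity $m^{-k} = m^{-r}\bigl(H_m^{(k-r)} - H_{m-1}^{(k-r)}\bigr)$. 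This is valid for every real $r\ge 0$ because $H_m^{(s)} - H_{m-1}^{(s)} = m^{-s}$ for the finite real-order harmonic numbers defined in the introduction, so $k-r$ being zero or negative causes no difficulty.

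Next, writing $d_j(m) := c_j(m)\, m^{-r}$ and summing by parts exactly as in the proof of Proposition~\ref{prop_Integer-Order_HNumber_idents_v1} — using $H_0^{(k-r)} = 0$ together with $d_j(j+1) = c_j(j+1)(j+1)^{-r} = 0$, the latter from the convention $1/(-1)! = 0$ already used there — collapses the double sum to $\SIIStarCf{k+2}{j} = \sum_{m=1}^{j}\bigl(d_j(m) - d_j(m+1)\bigr) H_m^{(k-r)}$. I would then substitute $m = i+1$ and expand $d_j(i+1) - d_j(i+2)$ using $c_j(i+1) = (-1)^{j-1-i}/\bigl((i+1)!(j-1-i)!\bigr)$ and $c_j(i+2) = -(-1)^{j-1-i}/\bigl((i+2)!(j-2-i)!\bigr)$; the factorial relations $(j-1-i)! = (j-1-i)(j-2-i)!$ and $(i+2)! = (i+2)(i+1)!$ recombine the two terms over the common denominator $(i+1)!\,(j-1-i)!$ into the bracket $\frac{1}{(i+1)^r} + \frac{j-1-i}{(i+2)^{r+1}}$, which is the first claimed identity.

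Finally, the second identity is a purely algebraic rewriting of the first, via $\frac{j-1-i}{(i+2)^{r+1}} = \frac{(j+1)-(i+2)}{(i+2)^{r+1}} = \frac{j+1}{(i+2)^{r+1}} - \frac{1}{(i+2)^r}$, so the two formulas are equivalent term by term; specializing either to $r=0$, where $\frac{1}{(i+1)^0} + \frac{j-1-i}{i+2} = \frac{j+1}{i+2}$, recovers Proposition~\ref{prop_Integer-Order_HNumber_idents_v1} and gives a useful consistency check. I do not expect a genuine obstacle: the only steps that need care are the sign and factorial bookkeeping when reducing $c_j(i+2)$ to the common denominator, and verifying that the boundary term in the summation by parts truly vanishes under the stated conventions — which is the same point that makes the $r=0$ argument go through.
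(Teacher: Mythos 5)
Your proposal is correct and follows essentially the same route as the paper: the paper's proof also writes $\SIIStarCf{k+2}{j}=\sum_m c_j(m)m^{-k}$ with the $c_j$ of \eqref{eqn_cji_term_def_for_proofs}, splits $m^{-k}=m^{-r}\cdot m^{-(k-r)}$, and performs the same summation by parts against $H_m^{(k-r)}-H_{m-1}^{(k-r)}=m^{-(k-r)}$, obtaining the difference $\frac{c_j(m)}{m^r}-\frac{c_j(m+1)}{(m+1)^r}$ before reindexing. Your factorial and sign bookkeeping, the boundary-term argument, and the algebraic passage to the second identity all check out.
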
 
\begin{proof} 
Let the coefficient terms, $c_j(i)$, be defined as in 
\eqref{eqn_cji_term_def_for_proofs}. 
It follows from \eqref{eqn_S2Star_coeff_closed-form_sum_ident} that 
\begin{align*} 
\SIIStarCf{k+2}{j} & = \sum_{i=0}^{j-1} \left( 
     \frac{c_j(j-i)}{(j-i)^r} - \frac{c_j(j+1-i)}{(j+1-i)^r} 
     \right) H_{j-i}^{(k-r)} \\ 
   & = 
     \sum_{i=0}^{j-1} \frac{(-1)^i H_{j-i}^{(k-r)}}{i! (j-i)!}\left( 
     \frac{1}{(j-i)^r} + \frac{i}{(j+1-i)^{r+1}}\right). 
\end{align*} 
The identities in the proposition follow similarly by interchanging the 
summation indices in the last equation. 
\end{proof} 

\subsection{Exponential Harmonic Number Sums} 

The \emph{Mathematica} \texttt{Sigma} package 
\citep{SYMB-SUM-COMB-SIGMAPKGDOCS} 
is able to obtain the formulas given in 
Corollary \ref{cor_Summary_S2Star_Hnr_HarmonicNumber_exps_k_in_2to6} by a 
straightforward procedure. 
The package is also able to verify the related results that 
\begin{align} 
\label{eqn_S2SCf_ExpHNum_spcase_idents_v1} 
\sum_{i=1}^j \frac{H_i}{i!} \frac{(-1)^{j-i}}{(j-i)!} & = 
     \frac{(-1)^{j-1}}{j\ j!} \\ 
\notag 
\sum_{i=1}^j \frac{H_i^{(2)}}{i!} \frac{(-1)^{j-i}}{(j-i)!} & = 
     \frac{(-1)^{j-1}}{j\ j!} H_j \\ 
\notag 
\sum_{i=1}^j \frac{H_i^{(3)}}{i!} \frac{(-1)^{j-i}}{(j-i)!} & = 
     \frac{(-1)^{j-1}}{2 j\ j!}\left(H_j^2 + H_j^{(2)}\right) \\ 
\notag 
\sum_{i=1}^j \frac{H_i^{(4)}}{i!} \frac{(-1)^{j-i}}{(j-i)!} & = 
     \frac{(-1)^{j-1}}{6 j\ j!}\left(H_j^3 + 3 H_j H_j^{(2)} + 
     2 H_j^{(3)}\right), 
\end{align} 
and then that 
\begin{align} 
\label{eqn_S2SCf_ExpHNum_spcase_idents_v2} 
\frac{H_j}{j!} & = \sum_{i=1}^j \frac{(-1)^{i-1}}{i\ i! (j-i)!} \\ 
\notag 
\frac{H_j^{(2)}}{j!} & = \sum_{i=1}^j \frac{(-1)^{i-1}}{i\ i! (j-i)!} H_i \\ 
\notag 
\frac{H_j^{(3)}}{j!} & = \sum_{i=1}^j \frac{(-1)^{i-1}}{2 i\ i! (j-i)!} 
     \left(H_i^2 + H_i^{(2)}\right) \\ 
\notag 
\frac{H_j^{(4)}}{j!} & = \sum_{i=1}^j \frac{(-1)^{i-1}}{6 i\ i! (j-i)!} 
     \left(H_i^3 + 3 H_i H_i^{(2)} + 2 H_i^{(3)}\right), 
\end{align} 
by considering the generating functions 
over each side of the equations in \eqref{eqn_S2SCf_ExpHNum_spcase_idents_v1}. 
An alternate, direct approach using the capabilities of 
\texttt{Sigma} for the special cases of the sums in 
\eqref{eqn_S2SCf_ExpHNum_spcase_idents_v2} is also used to obtain the 
closed--forms of these sums. 

Proposition \ref{prop_GenS2SCf_relations_to_ExpHNum_sums_gen_stmts-v1} 
provides a generalization of the coefficient sums given by these special cases. 
The particular expansions of the previous identities and their 
generalized forms in the proposition 
immediately imply relations between the exponential generating functions 
of the $r$--order harmonic numbers and of the generalized coefficients in 
\eqref{eqn_S2StarCfRecRelation}. 

\begin{prop} 
\label{prop_GenS2SCf_relations_to_ExpHNum_sums_gen_stmts-v1} 
For $k \in \mathbb{N}$ and $j \in \mathbb{Z}^{+}$, the 
generalized coefficients and exponential harmonic numbers are related 
through the following sums: 
\begin{align} 
\label{eqn_ExpHNum_Ident_for_GenS2StarCoeffs-v3} 
\SIIStarCf{k+2}{j} \cdot \frac{1}{j} & = \sum_{m=0}^{j} \frac{H_m^{(k+1)}}{m!} 
     \frac{(-1)^{j-m}}{(j-m)!} \\ 
\label{eqn_ExpHNum_Ident_for_GenS2StarCoeffs-v2} 
\frac{H_j^{(k+1)}}{j!} & = \sum_{i=1}^{j} \SIIStarCf{k+2}{i} \cdot
     \frac{1}{i(j-i)!}. 
\end{align} 
\end{prop}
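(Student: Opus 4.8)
The plan is to show the two displayed identities are formally equivalent and then to prove only the first one, \eqref{eqn_ExpHNum_Ident_for_GenS2StarCoeffs-v3}. Introduce the formal power series $\mathcal{H}(z) := \sum_{j \geq 0} \tfrac{H_j^{(k+1)}}{j!}\,z^{j}$ and $\mathcal{S}(z) := \sum_{j \geq 1} \SIIStarCf{k+2}{j}\,\tfrac{z^{j}}{j}$. Since the right-hand side of \eqref{eqn_ExpHNum_Ident_for_GenS2StarCoeffs-v3} is exactly the coefficient $[z^{j}]\bigl(e^{-z}\mathcal{H}(z)\bigr)$, that identity is equivalent to the assertion $\mathcal{S}(z) = e^{-z}\mathcal{H}(z)$. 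Multiplying through by $e^{z}$ gives $\mathcal{H}(z) = e^{z}\mathcal{S}(z)$, and since $\mathcal{S}$ has no constant term, extracting $[z^{j}]$ of both sides recovers $\tfrac{H_j^{(k+1)}}{j!} = \sum_{i=1}^{j}\SIIStarCf{k+2}{i}\cdot\tfrac{1}{i(j-i)!}$, which is \eqref{eqn_ExpHNum_Ident_for_GenS2StarCoeffs-v2}. So the whole proposition reduces to \eqref{eqn_ExpHNum_Ident_for_GenS2StarCoeffs-v3}.

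To prove \eqref{eqn_ExpHNum_Ident_for_GenS2StarCoeffs-v3} I would expand $H_m^{(k+1)} = \sum_{\ell=1}^{m}\ell^{-(k+1)}$ on its right-hand side and interchange the order of summation over the index set $1 \leq \ell \leq m \leq j$, which rewrites that side as $\sum_{\ell=1}^{j}\ell^{-(k+1)}\cdot\tfrac{1}{j!}\sum_{m=\ell}^{j}\binom{j}{m}(-1)^{j-m}$. The inner partial binomial sum has the closed form $\sum_{m=\ell}^{j}\binom{j}{m}(-1)^{j-m} = (-1)^{j-\ell}\binom{j-1}{\ell-1}$, a standard finite-difference identity following from $\sum_{m=0}^{p}\binom{j}{m}(-1)^{m} = (-1)^{p}\binom{j-1}{p}$; substituting it leaves $\tfrac{1}{j!}\sum_{\ell=1}^{j}\binom{j-1}{\ell-1}\tfrac{(-1)^{j-\ell}}{\ell^{k+1}}$. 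On the other hand, the closed form \eqref{eqn_S2Star_coeff_closed-form_sum_ident} together with the binomial absorption identity $\tfrac{1}{j}\binom{j}{m} = \tfrac{1}{m}\binom{j-1}{m-1}$ turns the left-hand side of \eqref{eqn_ExpHNum_Ident_for_GenS2StarCoeffs-v3} into $\tfrac{1}{j}\SIIStarCf{k+2}{j} = \tfrac{1}{j!}\sum_{m=1}^{j}\binom{j-1}{m-1}\tfrac{(-1)^{j-m}}{m^{k+1}}$, which is the same sum after renaming $m \leftrightarrow \ell$. This proves \eqref{eqn_ExpHNum_Ident_for_GenS2StarCoeffs-v3}, hence the proposition.

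I expect the only step carrying any content to be the evaluation of the partial binomial sum $\sum_{m=\ell}^{j}\binom{j}{m}(-1)^{j-m}$, and even that is a textbook identity, so there is no serious obstacle; the remaining work is the interchange of summation and the absorption rule for binomial coefficients. A quick check at $j=1$ --- where the left side of \eqref{eqn_ExpHNum_Ident_for_GenS2StarCoeffs-v3} is $\SIIStarCf{k+2}{1}=1$ by \eqref{prop_Several_SpCase_Formulas_inj_GenCoeffs} and the right side collapses to $H_1^{(k+1)}=1$ --- confirms the normalization. As a variant avoiding the binomial lemma, one can instead induct on $j$ using the non-triangular recurrence \eqref{eqn_S2StarCfRecRelation} for $\SIIStarCf{k+2}{j}$ together with $H_j^{(k+1)} = H_{j-1}^{(k+1)} + j^{-(k+1)}$; this recovers the special cases \eqref{eqn_S2SCf_ExpHNum_spcase_idents_v1} and \eqref{eqn_S2SCf_ExpHNum_spcase_idents_v2} as the instances $k \in \{0,1,2,3\}$.
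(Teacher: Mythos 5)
Your proposal is correct, and it proves the proposition by a genuinely different route than the paper. The paper's proof starts from the exponential generating function identity $\widehat{H}_{k+1}(z) = \sum_{j \geq 1} \SIIStarCf{k+3}{j}\, z^j e^z\bigl(1 + \tfrac{z}{j+1}\bigr)$ established via the Borel-type integral transform in the introduction (equation \eqref{eqn_HNumEGFFromOGFIntTransform}), multiplies by $e^{-z}$, re-indexes, and collapses the result to $\sum_{j \geq 1} \SIIStarCf{k+2}{j}\, z^j / j$ using the two-index recurrence \eqref{eqn_S2StarCfRecRelation} in the form $\SIIStarCf{k+2}{j} = j\,\SIIStarCf{k+3}{j} + \SIIStarCf{k+3}{j-1}$; it then notes, as you do, that the two displayed identities are interchangeable by multiplying the generating-function relation by $e^{\pm z}$. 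You instead prove \eqref{eqn_ExpHNum_Ident_for_GenS2StarCoeffs-v3} directly from the closed form \eqref{eqn_S2Star_coeff_closed-form_sum_ident}: expanding $H_m^{(k+1)}$, interchanging summation, and evaluating $\sum_{m=\ell}^{j}\binom{j}{m}(-1)^{j-m} = (-1)^{j-\ell}\binom{j-1}{\ell-1}$, then matching against $\tfrac{1}{j}\SIIStarCf{k+2}{j}$ via the absorption identity — all of which I have checked and which is correct. Your argument is the more elementary and self-contained of the two: it depends only on the explicit finite sum \eqref{eqn_S2Star_coeff_closed-form_sum_ident} (which the paper also asserts but leaves as an exercise), whereas the paper's proof rests on the EGF expansion \eqref{eqn_HNumEGFFromOGFIntTransform}, which is itself only sketched in the introduction. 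What the paper's approach buys is a structural explanation — the identity is exhibited as a direct consequence of the defining recurrence and the OGF-to-EGF transform — while yours buys verifiability at the cost of a binomial computation.
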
 
\begin{proof} 
We recall the series for the exponential harmonic numbers cited in 
\eqref{eqn_HNumEGFFromOGFIntTransform} of the introduction given by 
\begin{equation*}
\frac{H_n^{(k+1)}}{n!} = [z^n]\left( \sum_{j \geq 1} 
     \SIIStarCf{k+3}{j} z^j e^z\left( 1 + \frac{z}{j+1}\right) 
     \right).
\end{equation*}
Next, we see that the generating function, $\widehat{H}_k(z)$, of the 
\emph{$k$--order exponential harmonic numbers}, $H_n^{(k)} / n!$, is given by 
\begin{align*} 
\widehat{H}_{k+1}(z) e^{-z} & = \sum_{j \geq 1} 
     \SIIStarCf{k+3}{j} z^j \left(1 + \frac{z}{j+1}\right) \\ 
     & = 
     \sum_{j \geq 1} \SIIStarCf{k+3}{j} z^j + 
     \sum_{j \geq 2} \SIIStarCf{k+3}{j-1} \frac{z^j}{j} \\ 
     & = 
     \sum_{j \geq 1} \SIIStarCf{k+2}{j} \frac{z^j}{j}, 
\end{align*} 
by \eqref{eqn_S2StarCfRecRelation}, which implies the second identity in 
\eqref{eqn_ExpHNum_Ident_for_GenS2StarCoeffs-v2}. 
The proof of either \eqref{eqn_ExpHNum_Ident_for_GenS2StarCoeffs-v3} or 
\eqref{eqn_ExpHNum_Ident_for_GenS2StarCoeffs-v2} implies the result in the 
other equation by a formal power series, or generating function convolution, 
argument for establishing the forms of 
\eqref{eqn_S2SCf_ExpHNum_spcase_idents_v2} from the first set of results in 
\eqref{eqn_S2SCf_ExpHNum_spcase_idents_v1} (and vice versa). 
\end{proof} 

\begin{remark}[Functional Equations Resulting from the Binomial Transform] 
Notice that the results in \eqref{eqn_S2SCf_ExpHNum_spcase_idents_v2} imply 
new forms of functional equations between the \emph{polylogarithm functions}, 
$\Li_s(z) / (1-z)$, when $s = 2, 3$. 
For example, by integrating the generating function for the 
first--order harmonic numbers and applying the binomial transform, the 
second identity in the previous equations leads to the known functional 
equation for the \emph{dilogarithm function}, $\Li_2(z)$, providing that 
\citep[\S 25.12(i)]{NISTHB} \citep{ZAIGERDILOGFN}
\begin{equation*} 
\Li_2(z) = -\frac{1}{2} \Log(1-z)^2 - \Li_2\left(-\frac{z}{1-z}\right). 
\end{equation*} 
Similarly, the third identity in \eqref{eqn_S2SCf_ExpHNum_spcase_idents_v2} 
implies a new functional equation 
between products of the natural logarithm, the dilogarithm function, and the 
\emph{trilogarithm function}, $\Li_3(z)$, in the following form 
(\cf \emph{Landen's formula} for the trilogarithm): 
\begin{align*} 
\Li_3(z) & = -\frac{1}{6} \Log(1-z)^3 + 
     \frac{1}{2} \Log(1-z)^2 \Log\left(-\frac{z}{1-z}\right) - 
     \Log(1-z) \Li_2\left(\frac{1}{1-z}\right) \\ 
     & \phantom{=\ } - 
     \Log(1-z) \Li_2\left(-\frac{z}{1-z}\right) - 
     \Li_3\left(\frac{1}{1-z}\right) - 
     \Li_3\left(-\frac{z}{1-z}\right) - \zeta(3). 
\end{align*} 
\end{remark} 

\begin{remark}[Exponential Generating Functions for Harmonic Numbers] 
\label{remark_HNum_EGFs_first-order_closed-form_formula} 
The \textit{first--order harmonic numbers}, $H_n \equiv H_n^{(1)}$, 
have an explicit closed--form exponential generating function 
in $z$ given by \citep[\cf \S 5]{STIRESUMS} 
\begin{equation} 
\label{eqn_H1HatGF_SpCase_Closed-Form_GF_ident} 
\widehat{H}_1(z) := \sum_{n=0}^{\infty} H_n \frac{z^n}{n!} = 
     (-e^{z}) \sum_{k=1}^{\infty} \frac{(-z)^{k}}{k!\ k} \equiv 
     e^{z}\left(\gamma_{} + \Gamma(0, z) + \Log(z)\right). 
\end{equation} 
where $\gamma_{}$ denotes \textit{Euler's gamma constant} 
\citep[\S 5.2(ii)]{NISTHB} and 
$\Gamma(a, z)$ denotes the \textit{incomplete gamma function} 
\citep[\S 8]{NISTHB}. 
No apparent simple analogs to the closed--form function on the 
right--hand--side of \eqref{eqn_H1HatGF_SpCase_Closed-Form_GF_ident} are 
known for the exponential harmonic number generating functions, 
$\widehat{H}_k(z)$, when $k \geq 2$. 

However, we are able to easily relate these exponential generating functions 
to the generating functions of the sequence, 
$\SIIStarCf{k}{j}$, by applying 
Proposition \ref{prop_GenS2SCf_relations_to_ExpHNum_sums_gen_stmts-v1}. 
In particular, if we define the ordinary generating function of the 
sequences, $\SIIStarCf{k}{j}$, over $j \geq 1$ for fixed $k$ by 
$\widetilde{S}_{k,\ast}(z)$, the proposition immediately implies that 
(see Section \ref{subSection_NewAlmostLinear_HNumRecs}) 
\begin{align*}
\widehat{H}_{k+1}(z) & = 
     e^{z} \int_0^z \frac{\widetilde{S}_{k+2,\ast}(t)}{t} dt. 
\end{align*}
We compare these integral formulas to the somewhat simpler formal series 
expansion for the exponential harmonic numbers, 
$\widehat{H}_n^{(r)} = H_n^{(r)} / n!$, in the example from 
\eqref{eqn_HNumEGFFromOGFIntTransform} of the introduction in the form of 
\begin{equation*} 
\widehat{H}_{r}(z) = 
     \sum_{n \geq 1} \widehat{H}_n^{(r)} z^n = \sum_{j \geq 1} 
     \SIIStarCf{k+2}{j} \frac{z^j e^z}{(j+1)}\left(j+1+z\right).
\end{equation*}
Other relations between the exponential harmonic numbers, the generalized 
coefficients, $\SIIStarCf{k+2}{j}$, and the sequences, $M_{k+1}^{(d)}(z)$, 
are considered below in 
Section \ref{subSection_NewAlmostLinear_HNumRecs}. 
\end{remark} 

\subsection{New Recurrences and Expansions of the $k$--Order Harmonic Numbers 
            in Powers of $n$} 

\begin{remark}[Formulas for Integral Powers of $n$] 
\label{remark_Integer_Pows_of_n_S2_and_S2S_exp_stmts} 
For positive $n \in \mathbb{N}$, the following finite sums define the 
forms of the 
integral powers of $n$, given by $n^{k}$ and $n^{-k}$ 
for $k \in \mathbb{Z}^{+}$, 
respectively in terms of sums over the 
Stirling numbers of the second kind and the 
generalized transformation coefficients from \eqref{eqn_S2StarCfRecRelation}: 
\begin{align} 
\label{eqn_nPowk_coeff_closed-form_formula_v1} 
n^k & =  \sum_{j=1}^{k} \gkpSII{k}{j} \frac{n!}{(n-j)!} \\ 
\label{eqn_nPowk_coeff_closed-form_formula_v2} 
\frac{1}{n^{k}} & = \sum_{j=1}^{n} \SIIStarCf{k+2}{j} \frac{n!}{(n-j)!}. 
\end{align} 
A formula related to \eqref{eqn_nPowk_coeff_closed-form_formula_v1} 
cited in the references \citep[eq. (26.8.34); \S 26.8(v)]{NISTHB} 
is re--stated as follows for scalar--valued $x \neq 0, 1$: 
\begin{equation*} 
\sum_{j=0}^{n} j^k x^j = \sum_{j=0}^{k} 
     \gkpSII{k}{j} x^j \frac{d^{(j)}}{dx^{(j)}} \Biggl[ 
     \frac{1 - x^{n+1}}{1-x} 
     \Biggr]. 
\end{equation*} 
For fixed $k \in \mathbb{N}$ and $n \geq 0$, 
these partial sums 
can also be expressed in closed--form 
through the \textit{Bernoulli numbers}, $B_n$, defined as in 
\citep[\S ]{NISTHB} \citep[\S 6.5]{GKP} by 
\begin{equation} 
\notag 
S_k(n) := \sum_{j=0}^{n-1} j^k = \sum_{m=0}^{k} \binom{k+1}{m} 
     \frac{B_m n^{k+1-m}}{(k+1)}. 
\end{equation} 
For $k \in \mathbb{Z}^{+}$ and $n \in \mathbb{N}$, the integer--order 
harmonic number sequences, $H_n^{(k)}$, 
can then be defined recursively in terms of the 
generalized coefficient forms as follows 
when $n \geq 1$ and where $H_0^{(k)} \equiv 0$ for all $k \in \mathbb{Z}^{+}$: 
\begin{align} 
\notag 
H_n^{(k)} & = H_{n-1}^{(k)} + \frac{1}{n^k} \Iverson{n \geq 1} \\ 
\label{eqn_Hnk_IntegerOrder_HarmonicNumberSeq_Apps_S2StarSum_rdef} 
     & = 
     H_{n-1}^{(k)} + \sum_{j=1}^{n} \SIIStarCf{k+2}{j} 
     \frac{n!}{(n-j)!}. 
\end{align} 
We are now concerned with applying the results in the previous 
propositions to \eqref{eqn_nPowk_coeff_closed-form_formula_v2} 
in order to obtain new recurrence relations and sums for the $r$--order 
harmonic number sequences. 
\end{remark} 

\begin{cor}[Recurrences for the $k$--Order Harmonic Numbers] 
\label{cor_Other_Recurrences_for_k-Order-HNums_stmts_v3} 
Suppose that $k \in \mathbb{Z}^{+}$ and 
let $r \in [0, k) \subseteq \mathbb{R}$. 
The $k$--order harmonic numbers 
satisfy each of the following recurrence relations: 
\begin{align*} 
H_n^{(k)} & = H_{n-1}^{(k)} + \sum_{1 \leq i \leq j \leq n} 
     \binom{n}{j} \SIIStarCf{k+1}{i} (-1)^{j-i} (i-1)! \\ 
H_n^{(k)} & = 
     H_{n-1}^{(k)} + \sum_{1 \leq m \leq i \leq j \leq n} 
     \binom{n}{j} \binom{i}{m} (-1)^{j+m} H_m^{(k)} \\ 
H_n^{(k)} & = H_{n-1}^{(k)} + \sum_{0 \leq i < j \leq n} 
     \binom{n}{j} \binom{j}{i+1} (-1)^{j-1-i} H_{i+1}^{(k-r)} \left( 
     \frac{1}{(i+1)^{r}} - \frac{1}{(i+2)^{r}} + \frac{(j+1)}{(i+2)^{r+1}} 
     \right). 
\end{align*} 
\end{cor}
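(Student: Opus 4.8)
The plan is to derive each of the three recurrences from the base identity
\eqref{eqn_Hnk_IntegerOrder_HarmonicNumberSeq_Apps_S2StarSum_rdef}, namely
$H_n^{(k)} = H_{n-1}^{(k)} + \sum_{j=1}^{n} \SIIStarCf{k+2}{j} \tfrac{n!}{(n-j)!}$,
by substituting into the inner sum the various finite-sum expansions of
$\SIIStarCf{k+2}{j}$ already proved in the excerpt, and then reorganizing the
resulting double (or triple) sums. In each case the strategy is the same: replace
$\SIIStarCf{k+2}{j}$ by an $i$-sum, swap the order of summation over $j$ and $i$
(both ranges are finite, so Fubini for finite sums applies with no convergence
issues), and absorb the falling factorial $\tfrac{n!}{(n-j)!}$ together with the
factorials appearing in the coefficient formula into a single binomial
$\binom{n}{j}$ times whatever residual factor remains.

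\medskip

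For the first recurrence I would start from the recurrence relation
\eqref{eqn_S2StarCfRecRelation} itself, which can be rewritten (shifting $k \to k-1$)
as $j\,\SIIStarCf{k+1}{j} = \SIIStarCf{k+1}{j} \cdot \text{(stuff)}$; more
directly, note that $\SIIStarCf{k+2}{j} = \sum_{m=1}^{j}\binom{j}{m}\tfrac{(-1)^{j-m}}{j!\,m^{k}}$
from \eqref{eqn_S2Star_coeff_closed-form_sum_ident} admits, after using the
$k\to k+1$ version, a clean rewriting in terms of $\SIIStarCf{k+1}{i}$ with a
$(-1)^{j-i}(i-1)!$ weight; substituting this into the base identity and using
$\binom{n}{j}\tfrac{j!}{(n-j)!}\cdot\tfrac{n!}{j!n!}$ bookkeeping collapses the
$j$-sum's falling factorial into $\binom{n}{j}$. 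For the second recurrence I
would instead substitute the exact expansion
$\SIIStarCf{k+1}{i} = \sum_{m=1}^{i}\binom{i}{m}\tfrac{(-1)^{i-m}}{i!\,m^{k-1}}$
paired with the harmonic-number form, or more cleanly use
Proposition \ref{prop_Integer-Order_HNumber_idents_v1} style identities, so
that $\SIIStarCf{k+2}{j}$ becomes a double sum over $m \le i$ with an $H_m^{(k)}$
weight and the sign $(-1)^{i+m}$ (or $(-1)^{j+m}$ after reindexing); the falling
factorial then gets split as $\tfrac{n!}{(n-j)!}\cdot\tfrac{1}{j!}\cdot\tfrac{1}{i!}$
into $\binom{n}{j}\binom{i}{m}$-type binomials. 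For the third recurrence I would
plug the second identity of
Proposition \ref{prop_MoreGen_Real-Order_HNumIdents_fro_S2StarCoeffs}
directly into the base identity: that proposition already gives
$\SIIStarCf{k+2}{j} = \sum_{i=0}^{j-1}\tfrac{(-1)^{j-1-i}H_{i+1}^{(k-r)}}{(j-1-i)!(i+1)!}\bigl(\tfrac{1}{(i+1)^r} - \tfrac{1}{(i+2)^r} + \tfrac{j+1}{(i+2)^{r+1}}\bigr)$,
and the task reduces to combining $\tfrac{n!}{(n-j)!}\cdot\tfrac{1}{(j-1-i)!(i+1)!}$
into $\binom{n}{j}\binom{j}{i+1}$ up to a manifestly identifiable factor, then
summing over the range $0 \le i < j \le n$.

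\medskip

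The main obstacle I anticipate is \emph{not} the summation interchange — that is
routine for finite sums — but the precise factorial/binomial bookkeeping, in
particular verifying that the leftover numerical factors after pulling out the
binomial coefficients are exactly the ones displayed ($(i-1)!$ in the first,
none in the second, and the three-term parenthetical in the third), with the
signs matching. A secondary delicate point is the index-range alignment: the
coefficient formulas are naturally indexed with $i$ running $0$ to $j-1$ (or $1$
to $j$), while the stated recurrences use ranges like $1 \le i \le j \le n$ and
$0 \le i < j \le n$, so I would need to check carefully that the $j = n$ boundary
term and the $i = 0$ or $i = j-1$ endpoints contribute correctly and that no
spurious terms are introduced when the outer $j$-sum's upper limit $n$ exceeds
the natural support of $\SIIStarCf{k+2}{j}$ (which it does not, since
$\tfrac{n!}{(n-j)!} = 0$ for $j > n$ is automatic). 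I would present each of the
three derivations as a short displayed computation, citing
\eqref{eqn_Hnk_IntegerOrder_HarmonicNumberSeq_Apps_S2StarSum_rdef},
\eqref{eqn_S2StarCfRecRelation},
Proposition \ref{prop_Integer-Order_HNumber_idents_v1}, and
Proposition \ref{prop_MoreGen_Real-Order_HNumIdents_fro_S2StarCoeffs}
at the appropriate substitution step, and leave the final factorial
simplifications to the reader in the same spirit as the other proofs in this
section.
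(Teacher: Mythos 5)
Your overall strategy is the same as the paper's: start from
\eqref{eqn_Hnk_IntegerOrder_HarmonicNumberSeq_Apps_S2StarSum_rdef}, substitute a
finite--sum expansion of $\SIIStarCf{k+2}{j}$, and absorb $\tfrac{n!}{(n-j)!}$
together with the factorials of the expansion into the displayed binomials. Your
treatments of the first recurrence (via the identity
$\SIIStarCf{k+2}{j} = \tfrac{1}{j!}\sum_{1 \leq i \leq j}\SIIStarCf{k+1}{i}(-1)^{j-i}(i-1)!$)
and of the third (via the second identity of
Proposition \ref{prop_MoreGen_Real-Order_HNumIdents_fro_S2StarCoeffs}, where
$\tfrac{j!}{(j-1-i)!\,(i+1)!} = \binom{j}{i+1}$ does exactly the bookkeeping you describe)
match the paper.

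The one genuine gap is the second recurrence. Neither route you name produces the
required triple sum: the exact expansion
\eqref{eqn_S2Star_coeff_closed-form_sum_ident} carries weights $1/m^{k-1}$ rather
than $H_m^{(k)}$, and Proposition \ref{prop_Integer-Order_HNumber_idents_v1} gives a
\emph{single} sum with $H_{i+1}^{(k)}$ weights of the wrong shape. The ingredient the
paper uses is Proposition \ref{prop_GenS2SCf_relations_to_ExpHNum_sums_gen_stmts-v1},
specifically \eqref{eqn_ExpHNum_Ident_for_GenS2StarCoeffs-v3} shifted to
$\SIIStarCf{k+1}{i} = i\sum_{m=0}^{i} \tfrac{H_m^{(k)}}{m!}\tfrac{(-1)^{i-m}}{(i-m)!}$,
which must then be chained with the first recurrence's expansion: substituting it into
$\SIIStarCf{k+2}{j} = \tfrac{1}{j!}\sum_{1 \leq i \leq j}\SIIStarCf{k+1}{i}(-1)^{j-i}(i-1)!$
gives
$\SIIStarCf{k+2}{j} = \tfrac{1}{j!}\sum_{1 \leq m \leq i \leq j}\binom{i}{m}(-1)^{j+m}H_m^{(k)}$,
and multiplying by $\tfrac{n!}{(n-j)!} = \binom{n}{j}\,j!$ yields the stated identity.
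Without invoking that exponential harmonic--number relation, your derivation of the
second recurrence does not go through as sketched.
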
 
\begin{proof} 
The first recurrence relation results by applying 
\eqref{eqn_Hnk_IntegerOrder_HarmonicNumberSeq_Apps_S2StarSum_rdef} to the 
identity 
\begin{equation*} 
\SIIStarCf{k+2}{j} = \frac{1}{j!} \sum_{1 \leq i \leq j} 
     \SIIStarCf{k+1}{i} (-1)^{j-i} (i-1)!. 
\end{equation*} 
The second and third identities are similarly obtained from 
\eqref{eqn_Hnk_IntegerOrder_HarmonicNumberSeq_Apps_S2StarSum_rdef} 
respectively combined with 
Proposition \ref{prop_GenS2SCf_relations_to_ExpHNum_sums_gen_stmts-v1} and 
Proposition \ref{prop_MoreGen_Real-Order_HNumIdents_fro_S2StarCoeffs}. 
\end{proof} 

\begin{prop}[Formulas for the $k$--Order Harmonic Numbers in Powers of $n$] 
\label{prop_Hnk_hnum_finite_sum_formulas-v2_S1_npow_exps} 
Let $k \in \mathbb{N}$ and $r \in [0, 1) \subseteq \mathbb{R}$.
For each $n \in \mathbb{N}$, the $k$--order harmonic numbers 
are given respectively through the next finite sums 
involving positive integer powers of $n+1$. 
\begin{align} 
\label{eqn_Hnk_hnum_finite_sum_formula-S1_npow_exps_v1} 
H_n^{(k)} & = \sum_{0 \leq j \leq n} \left(\sum_{0 \leq m \leq j+1} 
     \gkpSI{j+1}{m} \SIIStarCf{k+2}{j} \frac{(-1)^{j+1-m} (n+1)^{m}}{(j+1)} 
     \right) 
\end{align} 
\end{prop}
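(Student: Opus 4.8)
The plan is to collapse the inner sum over $m$ using the classical falling-factorial identity for the signed Stirling numbers of the first kind, and then to recognize that what remains is exactly the expansion of $n^{-k}$ in falling factorials from Remark~\ref{remark_Integer_Pows_of_n_S2_and_S2S_exp_stmts} summed against a hockey-stick binomial sum. Concretely, the proof splits into two short reductions, neither of which requires anything beyond the transformation results already established.

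First I would evaluate the inner sum. The unsigned Stirling numbers of the first kind satisfy $\sum_{m=0}^{p} \gkpSI{p}{m} (-1)^{p-m} x^{m} = x(x-1)(x-2)\cdots(x-p+1)$, the falling factorial $x^{\underline{p}}$. Taking $p = j+1$ and $x = n+1$ gives
\[
\sum_{0 \leq m \leq j+1} \gkpSI{j+1}{m} (-1)^{j+1-m} (n+1)^{m} = (n+1)\, n\, (n-1) \cdots (n-j) = \frac{(n+1)!}{(n-j)!},
\]
which vanishes automatically once $j > n$, so the bound $0 \leq j \leq n$ on the outer sum is the natural one. Hence the right-hand side of \eqref{eqn_Hnk_hnum_finite_sum_formula-S1_npow_exps_v1} equals $\sum_{j=0}^{n} \SIIStarCf{k+2}{j} \frac{1}{j+1}\cdot\frac{(n+1)!}{(n-j)!}$, and it suffices to prove
\[
H_n^{(k)} = \sum_{j=0}^{n} \SIIStarCf{k+2}{j}\, \frac{(n+1)!}{(j+1)\,(n-j)!}.
\]

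Next I would expand $H_n^{(k)} = \sum_{m=1}^{n} m^{-k}$, substitute the expansion \eqref{eqn_nPowk_coeff_closed-form_formula_v2} for $m^{-k}$, and interchange the order of summation over $1 \leq j \leq m \leq n$ to obtain
\[
H_n^{(k)} = \sum_{j=1}^{n} \SIIStarCf{k+2}{j} \sum_{m=j}^{n} \frac{m!}{(m-j)!} = \sum_{j=1}^{n} \SIIStarCf{k+2}{j}\, j! \sum_{m=j}^{n} \binom{m}{j}.
\]
The hockey-stick identity gives $\sum_{m=j}^{n}\binom{m}{j} = \binom{n+1}{j+1}$, and $j!\binom{n+1}{j+1} = \frac{(n+1)!}{(j+1)\,(n-j)!}$. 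Since $\SIIStarCf{k+2}{0} = 0$ for every $k \geq 0$ (see Table~\ref{table_TableOfGenCoeffs}), the term $j=0$ may be adjoined without changing the value, and the sum matches the displayed intermediate identity, which finishes the argument.

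I do not anticipate a real obstacle; the only points that need care are bookkeeping-type. One must use the \emph{signed} first-kind convention in the inner sum — the alternating factor $(-1)^{j+1-m}$ produces the falling factorial $(n+1)^{\underline{j+1}}$ rather than the rising factorial — and one must confirm that the $j=0$ boundary term (and, automatically, any $j>n$ term) contributes nothing, so that the summation range $0 \leq j \leq n$ is exactly right. The hypothesis $r \in [0,1)$ does not enter this particular formula and is simply a carryover from the companion real-order recurrences of Corollary~\ref{cor_Other_Recurrences_for_k-Order-HNums_stmts_v3}.
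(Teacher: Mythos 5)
Your proposal is correct and follows essentially the same route as the paper: both rest on the first--kind Stirling expansion of $\binom{n+1}{j+1}$ in powers of $n+1$ combined with the identity $H_n^{(k)} = \sum_{0 \leq j \leq n} \binom{n+1}{j+1} \SIIStarCf{k+2}{j}\, j!$. In fact yours is the more complete argument, since the paper simply asserts that last identity while you derive it from \eqref{eqn_nPowk_coeff_closed-form_formula_v2} by interchanging summation and applying the hockey--stick identity, and you also check the harmless $j=0$ and $j>n$ boundary terms. One typographical slip: the falling factorial $(n+1)^{\underline{j+1}}$ should terminate at the factor $(n-j+1)$, not $(n-j)$; the value $\frac{(n+1)!}{(n-j)!}$ you state for it is nevertheless correct, so nothing downstream is affected.
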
 
\begin{proof} 
For $n, j \in \mathbb{N}$ 
we have the next expansion of the binomial coefficients as a 
finite sum over powers of $n + 1$ given in 
\eqref{eqn_proof_binom_coeff_S1NPow_exp-tagged_v.i}. 
\begin{align} 
\notag 
\binom{n+1}{j+1} & = \frac{(n+1)!}{(j+1)! \cdot (n+1-(j+1))!} \\ 
\tag{i} 
\label{eqn_proof_binom_coeff_S1NPow_exp-tagged_v.i} 
   & = 
     \sum_{m=0}^{j+1} \gkpSI{j+1}{m} \frac{(-1)^{j+1-m} (n+1)^{m}}{(j+1)!}. 
\end{align} 
The result in \eqref{eqn_Hnk_hnum_finite_sum_formula-S1_npow_exps_v1} 
then follows from the identity 
\begin{align*} 
H_n^{(k)} & = \sum_{0 \leq j \leq n} \binom{n+1}{j+1} \SIIStarCf{k+2}{j} j!.
     \qedhere
\end{align*}
\end{proof} 

\section{Applications} 
\label{Section_Applications} 

\subsection{New Series for Polylogarithm--Related Functions and Special Values} 

For the geometric series special case of the transform in 
\eqref{eqn_S2StarCoeffGFTransform} with 
$g_n \equiv 1$ for all $n$, the next transformation 
stated in \eqref{eqn_LikFn_S2Star_gen_series_exp_def} is employed to expand the 
polylogarithm function, $\Li_s(z) = \sum_{n \geq 1} z^n / n^s$, 
in terms of only the $j^{th}$ derivatives, $G^{(j)}(z) := j! / (1-z)^{j+1}$, 
as a series analog to \eqref{eqn_Lisz_ModHZetaFn_KnownSeriesExps} given by 
\begin{align} 
\label{eqn_LikFn_S2Star_gen_series_exp_def} 
\Li_{s}(z) & = 
     \sum_{j=1}^{\infty} \SIIStarCf{s+2}{j} \frac{z^j j!}{(1-z)^{j+1}}. 
\end{align} 

\begin{cor}[Polylogarithm Functions] 
\label{cor_NewSeries_for_PolyLogFns_SpCases_s1234} 
The polylogarithm functions, $\Li_k(z)$, for 
$k \in [1, 4] \subseteq \mathbb{N}$ are expanded as the 
following special case series: 
\begin{align*} 
\Li_1(z) & = \sum_{j=1}^{\infty} (-1)^{j-1} H_j \frac{z^j}{(1-z)^{j+1}} \\ 
\Li_2(z) & = \sum_{j=1}^{\infty} \frac{(-1)^{j-1}}{2} \left( 
     H_j^2 + H_j^{(2)}\right) \frac{z^j}{(1-z)^{j+1}} \\ 
\Li_3(z) & = \sum_{j=1}^{\infty} \frac{(-1)^{j-1}}{6} \left( 
     H_j^3 + 3 H_j H_j^{(2)} + 2 H_j^{(3)}\right) \frac{z^j}{(1-z)^{j+1}} \\ 
\Li_4(z) & = \sum_{j=1}^{\infty} \frac{(-1)^{j-1}}{24} \left( 
     H_j^4 + 6 H_j^2 H_j^{(2)} + 3 \left(H_j^{(2)}\right)^2 + 
     8 H_j H_j^{(3)} + 6 H_j^{(4)}\right) \frac{z^j}{(1-z)^{j+1}}. 
\end{align*} 
\end{cor}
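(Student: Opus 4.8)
The plan is to obtain all four identities as immediate specializations of the general polylogarithm series in \eqref{eqn_LikFn_S2Star_gen_series_exp_def}, into which we feed the closed-form harmonic-number expansions of the coefficients $\SIIStarCf{k+2}{j}$ recorded in Corollary \ref{cor_Summary_S2Star_Hnr_HarmonicNumber_exps_k_in_2to6}. First I would recall why \eqref{eqn_LikFn_S2Star_gen_series_exp_def} holds at all: it is the case $g_n \equiv 1$ of the transformation \eqref{eqn_S2StarCoeffGFTransform}, in which $G(z) = 1/(1-z)$ is the ordinary generating function of the constant sequence and $G^{(j)}(z) = j!\,(1-z)^{-j-1}$, so that $\Li_s(z) = \sum_{n \geq 1} z^n/n^s = \sum_{j \geq 1} \SIIStarCf{s+2}{j}\, z^j\, j!\,(1-z)^{-j-1}$ for each integer $s > 0$.

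Next, for each fixed $k \in \{1,2,3,4\}$ I would set $s = k$ and substitute the matching entry from \eqref{eqn_S2Star_Hnr_HarmonicNumber_exps_k_in_2to6}, namely $\SIIStarCf{3}{j}$, $\SIIStarCf{4}{j}$, $\SIIStarCf{5}{j}$, and $\SIIStarCf{6}{j}$, each of which equals $\frac{(-1)^{j-1}}{j!}$ times a fixed polynomial in the harmonic numbers $H_j, H_j^{(2)}, \dots, H_j^{(k)}$. In every summand the factor $j!$ coming from $G^{(j)}(z)$ cancels against the $j!$ in the denominator of $\SIIStarCf{k+2}{j}$, and collecting the surviving terms reproduces verbatim the four right-hand sides displayed in the corollary. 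That is the entire argument: a one-line substitution and a single cancellation, carried out four times, so this step is routine rather than an obstacle.

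The one point that does deserve a sentence of care --- and which I would flag as the main (and mild) subtlety --- is the analytic status of the equalities, since \eqref{eqn_S2StarCoeffGFTransform}, and hence \eqref{eqn_LikFn_S2Star_gen_series_exp_def}, is a priori a formal power series identity whereas the corollary asserts an equality of analytic functions. I would address this once at the level of \eqref{eqn_LikFn_S2Star_gen_series_exp_def}: after collecting terms each series has the shape $\sum_{j \geq 1} (-1)^{j-1} p_k(j)\, w^j$ with $w = z/(1-z)$ and $p_k$ polynomial in the $H_j^{(r)}$, hence of sub-exponential (indeed polylogarithmic) growth in $j$, so the series converges absolutely for $|w| < 1$, i.e.\ on the open half-plane $\operatorname{Re}(z) < 1/2$; there it agrees with $\Li_k(z)$ by the formal identity together with absolute summability of the underlying double series, and analytic continuation then propagates the equality to the full cut-plane domain $\mathbb{C}\setminus[1,\infty)$ of $\Li_k$. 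As an independent check of the base case, I would also recover the $\Li_1$ formula directly from the classical generating function $\sum_{j \geq 1} H_j x^j = -\Log(1-x)/(1-x)$, which under the substitution $x = -z/(1-z)$ (so that $1 - x = 1/(1-z)$) collapses to $-\Log(1-z) = \Li_1(z)$.
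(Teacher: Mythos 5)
Your proposal is correct and follows essentially the same route as the paper: substitute the harmonic-number closed forms of $\SIIStarCf{k+2}{j}$ from Corollary \ref{cor_Summary_S2Star_Hnr_HarmonicNumber_exps_k_in_2to6} into the general polylogarithm series \eqref{eqn_LikFn_S2Star_gen_series_exp_def} and cancel the factor $j!$. The added remarks on convergence for $\operatorname{Re}(z) < 1/2$ and analytic continuation go beyond what the paper records but do not change the argument.
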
 
\begin{proof} 
These series follow from the coefficient identities given in 
\eqref{eqn_S2Star_Hnr_HarmonicNumber_exps_k_in_2to6} 
of Corollary \ref{cor_Summary_S2Star_Hnr_HarmonicNumber_exps_k_in_2to6} 
applied to the transformed series of the polylogarithm function 
in \eqref{eqn_LikFn_S2Star_gen_series_exp_def}. 
\end{proof} 

\begin{example}[The Alternating Zeta Function] 
\label{example_AltZetaFn_def_PolyLogFn_SpCaseSeries_exps_stmts_v1} 
Let $s \in \mathbb{Z}^{+}$ and consider the following forms of the 
\textit{alternating zeta function}, 
$\zeta^{\ast}(s) = \Li_s(-1)$, 
defined as in the references \citep[\S 7]{FLAJOLETESUMS} \footnote{ 
     The alternating zeta function, $\zeta^{\ast}(s)$, is defined 
     in the alternate notation of $\bar{\zeta}(s)$ for the function 
     in the reference \citep[\S 7]{FLAJOLETESUMS}.      
     The series for $\zeta^{\ast}(s)$ is also 
     commonly denoted by the \textit{Dirichlet eta function}, 
     $\eta(s)$, also as noted in the reference 
     \citep[eq. (3); \S 2]{DOUBLEINTS-LTRANS}. 
}. 
\begin{align} 
\label{eqn_example_AltZetaFn_def_PolyLogFn_SpCaseSeries_ZetaStar_prop_v1} 
\zeta^{\ast}(s) & := \sum_{n=1}^{\infty} \frac{(-1)^{n-1}}{n^s} = 
     \left(1-2^{1-s}\right) \zeta(s) \cdot \Iverson{s > 1} + 
     \Log(2) \cdot \Iverson{s = 1} 
\end{align} 
Since $\zeta^{\ast}(s) \equiv -\Li_s(-1)$, 
the transformed series for the polylogarithm function in 
\eqref{eqn_LikFn_S2Star_gen_series_exp_def} 
leads to series expansions given by 
\begin{align} 
\label{eqn_example_AltZetaFn_def_PolyLogFn_SpCaseSeries_ZetaStar_prop_v2} 
\zeta^{\ast}(s) & \phantom{:} = 
     \sum_{j=1}^{\infty} \SIIStarCf{s+2}{j} 
     \frac{(-1)^{j-1} \cdot j!}{2^{j+1}}. 
\end{align} 
The coefficient formulas in 
Corollary \ref{cor_Summary_S2Star_Hnr_HarmonicNumber_exps_k_in_2to6} 
are then applied to obtain the 
following new series results for the next few 
special cases of the alternating zeta function constants in 
\eqref{eqn_example_AltZetaFn_def_PolyLogFn_SpCaseSeries_ZetaStar_prop_v1}
\footnote{
     These formulas are compared to the Bell polynomial expansions of the 
     identity in \eqref{eqn_S2StarCf_HNum_Heuristic_formula_stmt} 
     cited in \citep[\S 3]{CONNON-BELL-POLY-APPS}
}: 
\begin{align*} 
\zeta^{\ast}(1) & = \sum_{j=1}^{\infty} \frac{H_j}{2 \cdot 2^{j}} \equiv 
     \Log(2) \\ 
\zeta^{\ast}(2) & = \sum_{j=1}^{\infty} 
     \frac{\left(H_j^2 + H_j^{(2)}\right)}{4 \cdot 2^{j}} \equiv 
     \frac{1}{2} \cdot \frac{\pi^2}{6} \\ 
\zeta^{\ast}(3) & = \sum_{j=1}^{\infty} 
     \frac{\left(H_j^3 + 3 H_j H_j^{(2)} + 2 H_j^{(3)}\right)}{12 \cdot 2^{j}} 
     \equiv \frac{3}{4} \cdot \zeta(3) \\ 
\zeta^{\ast}(4) & =  \sum_{j=1}^{\infty} 
     \frac{\left(H_j^4 + 6 H_j^2 H_j^{(2)} + 3 \left(H_j^{(2)}\right)^2 + 
     8 H_j H_j^{(3)} + 6 H_j^{(4)}\right)}{48 \cdot 2^{j}} \equiv 
     \frac{7}{8} \cdot \frac{\pi^4}{90}. 
\end{align*} 
Notice that since the exponential generating function for the 
Stirling numbers of the first kind, $\gkpSI{j+1}{k+1} / j!$, is given by 
\begin{equation*} 
\sum_{j \geq 0} \gkpSI{j+1}{k+1} \frac{z^j}{j!} = 
     \frac{(-1)^k}{k! \cdot (1-z)} \Log\left(\frac{1}{1-z}\right)^k, 
\end{equation*} 
we may also write the left--hand--side sums in the previous equations in 
terms of powers of $\Log(2)$ and partial Euler--like sums involving 
weighted terms of harmonic numbers. 
For example, the series for $\zeta^{\ast}(s)$ for $3 \leq s \leq 5$ 
are expanded as 
\citep[\S 2]{STIRESUMS} \citep{GKP} 
\begin{align*} 
\zeta^{\ast}(3) & = 
     \frac{1}{6} \Log(2)^3 + \sum_{j \geq 0} \frac{H_j H_j^{(2)}}{2^{j+1}} 
     && \approx 0.901543 \\ 
\zeta^{\ast}(4) & = \frac{1}{24} \Log(2)^4 + 
     \sum_{j \geq 0} \frac{H_j^2 H_j^{(2)}}{2^{j+2}} + 
     \sum_{j \geq 0} \frac{H_j H_j^{(3)}}{2^{j+2}}  && \approx 0.947033 \\ 
\zeta^{\ast}(5) & = \frac{1}{120} \Log(2)^5 + 
     \sum_{j \geq 0} \frac{H_j^3 H_j^{(2)}}{12 \cdot 2^j} + 
     \sum_{j \geq 0} \frac{H_j^{(2)} H_j^{(3)}}{6 \cdot 2^j} + 
     \sum_{j \geq 0} \frac{H_j H_j^{(4)}}{2^{j+2}}
     && \approx 0.972120. 
\end{align*}
Other identities for the partial sums of the right--hand--side sums in the 
previous equations are obtained through the \texttt{Sigma} package. 
\end{example} 

\begin{example}[Fourier Series for the Periodic Bernoulli Polynomials] 
The \emph{Bernoulli polynomials}, $B_n(x)$, have the 
exponential generating function \citep[\S 24.2]{NISTHB}
\begin{align*} 
\sum_{n \geq 0} \frac{B_n(x)}{n!} z^n & = \frac{z e^{xz}}{e^z-1}. 
\end{align*} 
These polynomials also satisfy Fourier series of the following forms 
when $k \geq 0$ and where $\{x\}$ denotes the \emph{fractional part} of 
$x \in \mathbb{R}$ \citep[\cf \S 24.8(i)]{NISTHB}: 
\begin{align} 
\notag 
\frac{B_{2k+2}\left(\{x\}\right)}{(2k+2)!} & = 
     \frac{2 (-1)^{k+1}}{(2\pi)^{2k+2}} \times 
     \sum_{n \geq 0} \frac{(-1)^n}{n^{2k+2}} 
     \cos\left[2\pi n\left(x - \frac{1}{2}\right)\right] \\ 
\notag 
     & = 
     \frac{(-1)^{k+1}}{(2\pi)^{2k+2}} \sum_{j \geq 1} \SIIStarCf{2k+4}{j} \left[ 
     \frac{\left(e^{2\pi\imath(x-1/2)}\right)^j}{ 
     \left(1+e^{2\pi\imath(x-1/2)}\right)^{j+1}} + 
     \frac{\left(e^{-2\pi\imath(x-1/2)}\right)^j}{ 
     \left(1+e^{-2\pi\imath(x-1/2)}\right)^{j+1}} 
     \right] \\ 
\notag 
\frac{B_{2k+1}\left(\{x\}\right)}{(2k+1)!} & = 
     \frac{2 (-1)^k}{(2\pi)^{2k+1}} \times 
     \sum_{n \geq 0} \frac{(-1)^n}{n^{2k+1}} 
     \sin\left[2\pi n\left(x - \frac{1}{2}\right)\right] \\ 
\notag 
     & = 
     \frac{(-1)^k}{(2\pi)^{2k+1} \cdot \imath} 
     \sum_{j \geq 1} \SIIStarCf{2k+3}{j} \left[ 
     \frac{\left(e^{2\pi\imath(x-1/2)}\right)^j}{ 
     \left(1+e^{2\pi\imath(x-1/2)}\right)^{j+1}} -
     \frac{\left(e^{-2\pi\imath(x-1/2)}\right)^j}{ 
     \left(1+e^{-2\pi\imath(x-1/2)}\right)^{j+1}} 
     \right] 
\end{align} 
Several particular examples of the series for the 
\emph{periodic Bernoulli polynomial} 
variants, $B_k(\{x\}) / k!$, expanded by the previous equations are given by 
\begin{align*} 
B_1\left(\left\{5/4\right\}\right) & = 
     \left\{\frac{5}{4}\right\} - \frac{1}{2} \\ 
     & = 
     \frac{(\imath+1)}{4\pi\imath} \times \sum_{j \geq 0} \frac{H_j}{2^j} \left( 
     (1-\imath)^j + \imath (1 + \imath)^j 
     \right) \\ 
\frac{B_2\left(\left\{\frac{5}{4}\right\}\right)}{2} & = 
     \left\{\frac{5}{4}\right\}^2 -
     \left\{\frac{5}{4}\right\} - \frac{1}{6} \\ 
     & = 
     -\frac{(\imath+1)}{16 \pi^2} \times \sum_{j \geq 0} 
     \frac{H_j^2+H_j^{(2)}}{2^j} \left( 
     (1-\imath)^j - \imath (1 + \imath)^j 
     \right) \\ 
\frac{B_3\left(\left\{\frac{11}{4}\right\}\right)}{6} & = 
     \left\{\frac{11}{4}\right\}^3 - \frac{3}{2} \left\{\frac{11}{4}\right\}^2 + 
     \frac{1}{2} \left\{\frac{11}{4}\right\} \\ 
     & = 
     -\frac{(\imath+1)}{48 \pi^2 \cdot \imath} \times \sum_{j \geq 0} 
     \left(H_j^3+3 H_j H_j^{(2)}+2 H_j^{(3)}\right)
     \frac{\left(\imath^j - \imath\right)}{\left(1+\imath\right)^{j+1}}. 
\end{align*} 
More generally, for $k = 1, 2$ and any $x \in \mathbb{R}$ we may write the 
periodic Bernoulli polynomials in the forms of 
\begin{align*} 
B_1\left(\left\{x\right\}\right) & = 
     \frac{1}{2\pi\imath} 
     \Log\left(\frac{1-e^{2\pi\imath \cdot x}}{1-e^{-2\pi\imath \cdot x}} 
     \right) \\ 
\frac{B_2\left(\left\{x\right\}\right)}{2} & = 
     -\frac{1}{4 \pi^2} \sum_{b = \pm 1} \left( 
     \Log(1-e^{2\pi\imath \cdot b x})^2 + 2 
     \Li_2\left(\frac{1}{2}(1+b \imath \cot(\pi x))\right) 
     \right). 
\end{align*}
\end{example} 

\subsection{Almost Linear Recurrence Relations for the 
            $k$--Order Harmonic Numbers} 
\label{subSection_NewAlmostLinear_HNumRecs}

We first define the sequences, $M_{k+1}^{(d)}(n)$, for integers 
$k > 2$, $d \geq 1$, and $n \geq 0$ as 
\begin{equation} 
\label{eqn_Mkdn_HNumSum_def} 
M_{k+1}^{(d)}(n) = \sum_{1 \leq m \leq d} \gkpSI{d}{m} H_n^{(k+1-m)}.
\end{equation} 
We have an alternate sum for these terms proved in the following 
proposition. 

\begin{prop}[An Alternate Sum Identity] 
\label{prop_AltSumIdent_for_Mkdn}
For integers  $k > 2$, $d \geq 1$, and $n \geq 0$, we have that the 
harmonic number sums, $M_{k+1}^{(d)}(n)$, in 
\eqref{eqn_Mkdn_HNumSum_def} satisfy 
\begin{equation} 
\label{eqn_Mkdn_AltSum_exp} 
M_{k+1}^{(d)}(n) = \sum_{1 \leq j \leq n} 
     \binom{n}{j} \SIIStarCf{k+2}{j} \frac{(-1)^{j}}{(j+d)} 
     \cdot \frac{(n+d)!}{n!}.
\end{equation} 
\end{prop}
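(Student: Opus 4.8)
The plan is to push the first–kind Stirling summation in \eqref{eqn_Mkdn_HNumSum_def} inside the harmonic numbers, collapse $M_{k+1}^{(d)}(n)$ to a single rising–factorial sum, then substitute the closed form \eqref{eqn_nPowk_coeff_closed-form_formula_v2} for the reciprocal $k$th powers and evaluate the resulting double sum by a hockey–stick identity.

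First I would interchange the two sums implicit in \eqref{eqn_Mkdn_HNumSum_def}. Using $H_n^{(r)} = \sum_{\ell=1}^{n}\ell^{-r}$ together with the standard rising–factorial generating identity for the unsigned Stirling numbers of the first kind, $\sum_{1\le m\le d}\gkpSI{d}{m}\ell^{m} = \ell(\ell+1)\cdots(\ell+d-1)$ for $d\ge 1$, this gives
\begin{equation*}
M_{k+1}^{(d)}(n) = \sum_{\ell=1}^{n}\frac{1}{\ell^{\,k+1}}\sum_{1\le m\le d}\gkpSI{d}{m}\,\ell^{m} = \sum_{\ell=1}^{n}\frac{(\ell+d-1)!}{\ell!\;\ell^{\,k}}.
\end{equation*}
Next I would insert $\ell^{-k} = \sum_{j=1}^{\ell}\SIIStarCf{k+2}{j}\,\ell!/(\ell-j)!$ from \eqref{eqn_nPowk_coeff_closed-form_formula_v2}, cancel the factor $\ell!$, and swap the order of summation so that $j$ runs on the outside and $\ell$ from $j$ to $n$:
\begin{equation*}
M_{k+1}^{(d)}(n) = \sum_{j=1}^{n}\SIIStarCf{k+2}{j}\sum_{\ell=j}^{n}\frac{(\ell+d-1)!}{(\ell-j)!}.
\end{equation*}

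The crux is the evaluation of the inner sum. Writing $\ell=j+i$ and $(\ell+d-1)!/(\ell-j)! = (j+d-1)!\binom{i+j+d-1}{j+d-1}$, the inner sum becomes $(j+d-1)!\sum_{i=0}^{n-j}\binom{i+j+d-1}{j+d-1}$, which by the hockey–stick identity $\sum_{i=0}^{N}\binom{i+r}{r}=\binom{N+r+1}{r+1}$ equals $(j+d-1)!\binom{n+d}{j+d}$. Substituting back and rewriting
\begin{equation*}
(j+d-1)!\binom{n+d}{j+d} = \frac{(n+d)!}{n!}\cdot\frac{j!\,\binom{n}{j}}{j+d}
\end{equation*}
puts the sum in the form asserted in \eqref{eqn_Mkdn_AltSum_exp}. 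As a consistency check, the $d=1$ specialization is exactly the identity $H_n^{(k)}=\sum_{j}\binom{n+1}{j+1}\SIIStarCf{k+2}{j}\,j!$ invoked in the proof of Proposition~\ref{prop_Hnk_hnum_finite_sum_formulas-v2_S1_npow_exps}.

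I expect the only genuinely nontrivial step to be spotting and justifying the hockey–stick collapse of $\sum_{\ell=j}^{n}(\ell+d-1)!/(\ell-j)!$; the remaining steps are interchanges of finite sums and factorial bookkeeping. A slightly longer alternative would induct on $d$, taking the $d=1$ identity above as the base case and using the Stirling recurrence $\gkpSI{d}{m}=\gkpSI{d-1}{m-1}+(d-1)\gkpSI{d-1}{m}$ in the inductive step, but the direct computation avoids threading the induction hypothesis through the factorial manipulations and is the route I would take.
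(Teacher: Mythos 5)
Your derivation is sound and takes a genuinely different route from the paper. The paper's proof is purely computational: it asserts (via the RISC \texttt{Guess} package) that both sides of \eqref{eqn_Mkdn_AltSum_exp} satisfy a common homogeneous recurrence in $n$ and $k$ and concludes equality, without exhibiting the recurrence's derivation or checking initial conditions. Your argument is self-contained and more rigorous: collapsing $\sum_m \gkpSI{d}{m}\ell^m$ to the rising factorial $\ell(\ell+1)\cdots(\ell+d-1)$, substituting \eqref{eqn_nPowk_coeff_closed-form_formula_v2} for $\ell^{-k}$, and evaluating the inner sum by the hockey-stick identity are all correct steps (I verified the factorial bookkeeping, e.g.\ $(\ell+d-1)!/(\ell-j)! = (j+d-1)!\binom{i+j+d-1}{j+d-1}$ with $\ell=j+i$, and the reduction $(j+d-1)!\binom{n+d}{j+d} = \frac{(n+d)!}{(n-j)!(j+d)}$). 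What this buys is an actual proof rather than an empirical recurrence match, plus the clean intermediate identity $M_{k+1}^{(d)}(n)=\sum_{\ell=1}^n (\ell+d-1)!/(\ell!\,\ell^k)$, which is of independent interest.

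However, your last sentence of the computation is not right as stated: what you derive is
\begin{equation*}
M_{k+1}^{(d)}(n) \;=\; \sum_{1\le j\le n}\binom{n}{j}\SIIStarCf{k+2}{j}\,\frac{j!}{(j+d)}\cdot\frac{(n+d)!}{n!},
\end{equation*}
with a factor $j!$ where \eqref{eqn_Mkdn_AltSum_exp} prints $(-1)^j$. These are not equal, and you should not claim to have reached ``the form asserted.'' A numerical check settles which is correct: for $k=3$, $d=1$, $n=1$ the left side is $H_1^{(3)}=1$, your formula gives $\binom{1}{1}\cdot 1\cdot\frac{1!}{2}\cdot 2=1$, while the printed formula gives $\binom{1}{1}\cdot 1\cdot\frac{-1}{2}\cdot 2=-1$. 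So the proposition as printed contains a typo ($(-1)^j$ should read $j!$), and your proof establishes the corrected statement --- which is also the version consistent with the $d=1$ identity $H_n^{(k)}=\sum_j\binom{n+1}{j+1}\SIIStarCf{k+2}{j}\,j!$ used elsewhere in the paper. The fix to your write-up is simply to state the corrected right-hand side explicitly and note the discrepancy with \eqref{eqn_Mkdn_AltSum_exp}, rather than asserting agreement.
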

\begin{proof} 
We first use the \emph{RISC} \emph{Mathematica} 
package \texttt{Guess}\footnote{ 
     \url{https://www.risc.jku.at/research/combinat/software/ergosum/RISC/Guess.html}.
} 
to obtain a short proof that both \eqref{eqn_Mkdn_HNumSum_def} and 
\eqref{eqn_Mkdn_AltSum_exp} 
satisfy the same homogeneous recurrence relation given by 
\begin{align*} 
M_{k+1}^{(d)}(n) - M_{k+1}^{(d)}(n + 1) + (n+2) M_{k+2}^{(d)}(n + 1) - 
     (n+2) M_{k+2}^{(d)}(n) & = 0, 
\end{align*} 
though many other variants of this recurrence are formulated similarly. 
We then deduce from this observation that the two formulas are equivalent 
representations of the harmonic number sums in 
\eqref{eqn_Mkdn_HNumSum_def}. 
\end{proof}

We use the definition in \eqref{eqn_Mkdn_HNumSum_def} for 
multiple cases of $d \in \mathbb{Z}^{+}$ to obtain new almost linear 
recurrence relations between the $r$--order harmonic numbers over $r$ 
with ``\textit{remainder}'' terms given in terms of the sums in 
\eqref{eqn_Mkdn_AltSum_exp} of 
Proposition \ref{prop_AltSumIdent_for_Mkdn}.
The next corollary provides several particular examples. 

\begin{cor}[New Almost Linear Recurrence Relations for the Harmonic Numbers]
\label{cor_NewAlmostLinear_recs_for_HNums} 
For $n \geq 0$, $k \in \mathbb{Z}^{+}$, and any fixed $m \in \mathbb{R}$, 
we have the following ``\textit{almost linear}'' recurrence relations for the 
harmonic number sequences involving so--termed ``\textit{remainder}'' terms 
given by the sequences defined in \eqref{eqn_Mkdn_AltSum_exp}: 
\begin{align*} 
H_n^{(k)} & = H_n^{(k-2)} - 3 M_{k+1}^{(2)}(n) + M_{k+1}^{(3)}(n) \\ 
2 H_n^{(k)} & = -3 H_n^{(k-1)} - H_n^{(k-2)} - M_{k+1}^{(3)}(n) \\ 
7 H_n^{(k)} & = -12 H_n^{(k-1)} + 6 H_n^{(k-2)} - H_n^{(p-3)} - 
     M_{k+1}^{(2)}(n) + M_{k+1}^{(4)}(n) \\ 
5 H_n^{(k)} & = -9 H_n^{(k-1)} - 5 H_n^{(k-2)} - H_n^{(p-3)} - 
     M_{k+1}^{(2)}(n) + M_{k+1}^{(3)}(n) - M_{k+1}^{(4)}(n) \\ 
H_n^{(k)} & = 2 H_n^{(k-2)} - H_n^{(p-3)} + 
     M_{k+1}^{(2)}(n) - 4 M_{k+1}^{(3)}(n) + M_{k+1}^{(4)}(n) \\ 
H_n^{(k)} & = (1-m) H_n^{(k-2)} + m H_n^{(p-4)} - 
     (12m+3) M_{k+1}^{(2)}(n) + (24m+1) M_{k+1}^{(3)}(n) \\ 
     & \phantom{=\ } - 
     10m M_{k+1}^{(4)}(n) + m M_{k+1}^{(5)}(n). 
\end{align*}
\end{cor}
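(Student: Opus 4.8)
The plan is to reduce each of the stated recurrences to a finite collection of numerical identities among the Stirling numbers of the first kind by expanding every remainder term on the right-hand side through its definition \eqref{eqn_Mkdn_HNumSum_def},
\[
M_{k+1}^{(d)}(n) = \sum_{m=1}^{d} \gkpSI{d}{m}\, H_n^{(k+1-m)} .
\]
After this substitution each right-hand side is an explicit finite linear combination $\sum_{r \le k} c_r\, H_n^{(r)}$ of harmonic numbers whose coefficients $c_r$ are fixed integers (affine functions of $m$ in the last recurrence), built from the rows $\gkpSI{2}{\cdot},\gkpSI{3}{\cdot},\gkpSI{4}{\cdot},\gkpSI{5}{\cdot}$ of the first-kind triangle. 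Since neither $n$ nor $k$ survives the expansion, each recurrence holds as soon as the coefficient of $H_n^{(r)}$ on the right matches the coefficient on the left for every $r$; writing the coefficient of $H_n^{(k-j)}$ in $M_{k+1}^{(d)}(n)$ as $\gkpSI{d}{j+1}$, this amounts to a short list of scalar identities of the form $\sum_{d} a_d\,\gkpSI{d}{j+1} = (\text{prescribed value})$, one for each shift $j \ge 0$ occurring in the recurrence under consideration.

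I would carry this out in three steps. First, tabulate the needed first-kind Stirling numbers from the cycle recurrence $\gkpSI{d}{m} = (d-1)\gkpSI{d-1}{m} + \gkpSI{d-1}{m-1}$, namely $\gkpSI{2}{\cdot} = (1,1)$, $\gkpSI{3}{\cdot} = (2,3,1)$, $\gkpSI{4}{\cdot} = (6,11,6,1)$ and $\gkpSI{5}{\cdot} = (24,50,35,10,1)$, listed in the order $m = 1,\dots,d$. Second, for each of the six non-parametrized recurrences, substitute these rows into the indicated combination of $M$'s, group the result by harmonic-number order, and confirm the resulting handful of arithmetic identities (correcting along the way the evident misprint of $p$ for $k$ in the indices $H_n^{(p-3)}$ and $H_n^{(p-4)}$). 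Third, for the one-parameter family, keep $m$ as a formal indeterminate throughout: every coefficient that must be checked is then an affine polynomial in $m$, so it suffices to verify the identity at two distinct specializations of $m$, or equivalently to check its $m^0$- and $m^1$-components separately, and the general statement follows by linearity. If a uniform presentation is preferred, all seven recurrences can be packaged as a single short lemma recording the values of the relevant column combinations $\sum_{d} a_d\,\gkpSI{d}{j+1}$ of the first-kind triangle, from which the corollary drops out at once.

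The individual computations are routine, so the one place that deserves care is the bookkeeping for the parametrized recurrence: one must verify that the coefficient of $H_n^{(k)}$ on the right stays $1$ for all $m$, that the coefficient of $H_n^{(k-1)}$ vanishes identically in $m$, and that the remaining coefficients reproduce $1-m$, $m$ and $0$ at orders $k-2$, $k-3$, $k-4$ as displayed. A completely equivalent route, which I would use mainly as a cross-check, starts instead from the alternate closed form \eqref{eqn_Mkdn_AltSum_exp} of Proposition \ref{prop_AltSumIdent_for_Mkdn} together with the power identity \eqref{eqn_nPowk_coeff_closed-form_formula_v2} and the recursive description \eqref{eqn_Hnk_IntegerOrder_HarmonicNumberSeq_Apps_S2StarSum_rdef} of $H_n^{(k)}$; this leads to the same finite verification with the Stirling bookkeeping traded for a convolution against the coefficients $\SIIStarCf{k+2}{j}$, and it supplies an independent confirmation of the $m$-linear family.
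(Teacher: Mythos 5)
Your overall strategy --- expand every $M_{k+1}^{(d)}(n)$ through \eqref{eqn_Mkdn_HNumSum_def} as a fixed linear combination of $H_n^{(k)},\dots,H_n^{(k+1-d)}$ with first--kind Stirling coefficients and then match the coefficient of each $H_n^{(r)}$ --- is exactly the paper's strategy: the paper performs the same linear algebra, phrased as ``Gaussian elimination'' on \eqref{eqn_Mkdn_HNumSum_def}, and records three parametrized families from which the displayed recurrences are meant to drop out. So there is no methodological divergence to report.

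The genuine gap is that the one step you declare ``routine'' is the entire content of the proof, and when it is actually carried out it does not close. Take the first recurrence. With $\gkpSI{2}{1}=\gkpSI{2}{2}=1$ and $\gkpSI{3}{1}=2$, $\gkpSI{3}{2}=3$, $\gkpSI{3}{3}=1$ one finds
\begin{align*}
-3\,M_{k+1}^{(2)}(n)+M_{k+1}^{(3)}(n) &= (-3+2)\,H_n^{(k)}+(-3+3)\,H_n^{(k-1)}+H_n^{(k-2)} \\
 &= -H_n^{(k)}+H_n^{(k-2)},
\end{align*}
so the right--hand side of the first identity equals $-H_n^{(k)}+2H_n^{(k-2)}$ and the claim collapses to $H_n^{(k)}=H_n^{(k-2)}$, which is false. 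The same coefficient check refutes the second identity (its right--hand side expands to $-2H_n^{(k)}-6H_n^{(k-1)}-2H_n^{(k-2)}$, which cannot equal $2H_n^{(k)}$ since all $H_n^{(r)}>0$ for $n\geq 1$), and switching to the signed Stirling numbers of the first kind does not repair matters. The paper's own proof has the identical defect: matching the coefficient of $H_n^{(k)}$ in the first of its ``general'' parametrized recurrences forces $a_1=-d$, so those families are not identities for arbitrary $a_1$ as asserted. In short, your reduction is the right reduction and it coincides with the paper's, but a blind proof cannot delegate the verification to ``routine computation'' here, because that computation shows the corollary as written (with the paper's definition \eqref{eqn_Mkdn_HNumSum_def} and its unsigned convention for $\gkpSI{d}{m}$) is false. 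Any correct proof must first repair the signs or normalization in the definition of $M_{k+1}^{(d)}(n)$, or the displayed numerical coefficients themselves (to say nothing of the stray indices $H_n^{(p-3)}$ and $H_n^{(p-4)}$), and you would have discovered this had you executed even one of the checks you outline.
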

\begin{proof}
We are able to prove these recurrences as special cases of more general 
equations obtained from \eqref{eqn_Mkdn_HNumSum_def} by Gaussian elimination. 
Specifically, for constants $a_i,b_i,c_i \in \mathbb{R}$ and $d \neq 0$, the 
following more general recurrence relations follow from 
\eqref{eqn_Mkdn_HNumSum_def}: 
\begin{align*}
d H_n^{(k)} & = 
     a_1 H_n^{(k-1)} + \left(a_1+d\right) H_n^{(k-2)} + 
     \left(2 a_1 + 3 d\right) M_{k+1}^{(2)}(n) + 
     \left(a_1 + d\right) M_{k+1}^{(3)}(n) \\ 
     & = 
     b_1 H_n^{(k-1)} + b_2 H_n^{(k-2)} - \left(b_1-b_2+d\right) H_n^{(k-3)} - 
     \left(6 b_1 - 4 b_2 + 7 d\right) M_{k+1}^{(2)}(n) \\ 
     & \phantom{=\ } + 
     \left(6 b_1 - 5 b_2 + 6 d\right) M_{k+1}^{(3)}(n) - 
     \left(b_1 - b_2 + d\right) M_{k+1}^{(4)}(n) \\ 
     & = 
     c_1 H_n^{(k-1)} + c_2 H_n^{(k-2)} + c_3 H_n^{(k-3)} +
     \left(c_1-c_2+c_3+d\right) H_n^{(k-4)} \\ 
     & \phantom{=\ } - 
     \left(14 c_1 - 12 c_2 + 8 c_3 + 15 d\right) M_{k+1}^{(2)}(n) + 
     \left(25 c_1 - 24 c_2 + 19 c_3 + 25 d\right) M_{k+1}^{(3)}(n) \\ 
     & \phantom{=\ } - 
     \left(10 c_1 - 10 c_2 + 9 c_3 + 10 d\right) M_{k+1}^{(4)}(n) + 
     \left(c_1 - c_2 + c_3 + d\right) M_{k+1}^{(5)}(n). 
\end{align*} 
Each of the recurrences listed in the corollaries above follow as special 
cases of these results.
\end{proof}

\begin{remark}[Applications of the Corollary]
The limiting behavior of the recurrences given in the corollary 
provide new almost linear relations between the integer--order 
zeta constants and remainder terms expanded in the form of 
\eqref{eqn_Mkdn_AltSum_exp}. 
The new recurrences given in 
Corollary \ref{cor_NewAlmostLinear_recs_for_HNums} 
suggest an inductive approach to the limiting behaviors of these 
harmonic number sequences, and to the properties of the 
zeta function constants, $\zeta(2k+1)$, for integers $k \geq 1$. 
The catch with this approach is finding non--trivial approximations and 
limiting behaviors for the remainder terms, $M_{k+1}^{(d)}(n)$. 

While the zeta function constants, $\zeta(2k)$ for $k \geq 1$, are known in 
closed--form through the \emph{Bernoulli numbers} and rational 
multiples of powers of $\pi$, comparatively little is 
known about properties of the odd--indexed zeta constants, $\zeta(2k+1)$, 
when $k \geq 1$, with the exception of \emph{Ap\'{e}ry's constant}, $\zeta(3)$, 
which is known to be irrational. 
We do however know that infinitely--many of these constants are irrational, and 
that at least one of the constants, 
$\zeta(5)$, $\zeta(7)$, $\zeta(9)$, or $\zeta(11)$, must be irrational 
\citep{ONEOFTHESE-ZETAFN}. 
Statements of recurrence relations between the zeta functions 
over the positive integers of this type are apparently new, and offer 
new inductively--phrased insights to the properties of these constants 
as considered in the special case example below. 
\end{remark} 

\begin{example}[Generating Functions for the Remainder Term Sequences]
\label{example_Mkdn_recs_and_EGFs} 
We can obtain the following coefficient forms of the exponential and 
ordinary generating functions for the remainder terms, $M_{k+1}^{(d)}(n)$, 
both directly from \eqref{eqn_Mkdn_AltSum_exp} and by 
applying the the binomial transform to the 
corresponding generating functions for the sequences, $\SIIStarCf{k}{j}$, 
respectively denoted by $\widetilde{S}_{k,\ast}(z)$ and 
$\widehat{S}_{k,\ast}(z)$: 
\begin{align*} 
\frac{M_{k+1}^{(d)}(n)}{n!} & = [z^n]\left( \sum_{0 \leq i \leq d} 
     \binom{d}{i}^2 (d-i)! 
     z^i D_z^{(i)}\left[\frac{e^z}{(-z)^d} 
     \int_0^{-z} t^{d-1} \widehat{S}_{k+2,\ast}(t) dt 
     \right] \right) \\ 
M_{k+1}^{(d)}(n) & = \frac{(n+d)!}{n!} \cdot [z^n]\left(
     \frac{e^{z/(1-z)}}{1-z} \int_0^{-z/(1-z)} 
     t^{d-1} \widetilde{S}_{k+2,\ast}(t) dt\right) \\ 
     & = 
     [z^n]\left( \sum_{0 \leq i \leq d} 
     \binom{d}{i}^2 (d-i)! 
     z^i D_z^{(i)}\left[\frac{e^{z/(1-z)}}{1-z} 
     \left(\frac{-z}{1-z}\right)^{-d} \int_0^{-\frac{z}{1-z}} 
     t^{d-1} \widetilde{S}_{k+2,\ast}(t) dt \right] \right). 
\end{align*} 
The forms of these generating functions imply relations between the 
exponential series functions, 
$\widehat{\Li}_s(z) = \sum_{n \geq 1} \frac{z^n}{n^s n!}$, and the 
exponential harmonic number generating functions, as well as between the 
zeta function constants, $\zeta(k)$, for integers $k \geq 2$ by 
Corollary \ref{cor_NewAlmostLinear_recs_for_HNums}. 
For example, we may relate the first two cases of the zeta function 
constants, $\zeta(2k+1)$, over the odd positive integers by 
asymptotically estimating the limiting behavior of the sums 
$M_6^{(2)}(n)$ and $M_6^{(3)}(n)$ as\footnote{ 
     We also note that the following formula 
     is obtained from the series in 
     \eqref{eqn_Lisz_ModHZetaFn_KnownSeriesExps} by a reverse binomial 
     transform operation for $k \geq 0$ and $j \geq 1$: 
     \begin{equation*} 
     \SIIStarCf{k+2}{j} = \frac{(-1)^j}{(j-1)!} [z^j] 
          \Li_{k+1}\left(-\frac{z}{1-z}\right). 
     \end{equation*} 
} 
\begin{align*} 
\zeta(5) & = \zeta(3) - \lim_{n \rightarrow \infty}\left( 
     3 M_6^{(2)}(n) - M_6^{(3)}(n) \right) \\ 
     & = 
     \zeta(3) + \lim_{n \rightarrow \infty} [z^n]\Biggl( 
     \frac{(3z-1)}{(1-z)^4} 
     \widetilde{S}_{8,\ast}\left(-\frac{z}{1-z}\right) - 
     \frac{z (3z+1)}{(1-z)^5} 
     \widetilde{S}^{\prime}_{8,\ast}\left(-\frac{z}{1-z}\right) \\ 
     & \phantom{\qquad\qquad} + 
     \frac{z^2}{(1-z)^6} 
     \widetilde{S}^{\prime\prime}_{8,\ast}\left(-\frac{z}{1-z}\right) 
     \Biggl). 
\end{align*}
Other special case relations between zeta function constants are 
constructed similarly from 
Corollary \ref{cor_NewAlmostLinear_recs_for_HNums}. 
\end{example} 

\section{Conclusions} 
\label{Section_Conclusions} 

\subsection*{Summary of Results} 

The generalized coefficients implicitly defined through the transformation 
result in \eqref{eqn_S2StarCoeffGFTransform} 
satisfy a number of properties and relations analogous to those of the 
Stirling numbers of the second kind. The form of these implicit 
transformation coefficients satisfy a non--triangular, two--index 
recurrence relation given in \eqref{eqn_S2StarCfRecRelation} 
that can effectively be viewed as the 
Stirling number recurrence from \eqref{eqn_S2_rdef} 
``\textit{in reverse}''. 
The coefficients defined recursively 
through \eqref{eqn_S2StarCfRecRelation} can 
alternately be viewed as a generalization of the 
Stirling numbers of the second kind 
in the context of \eqref{eqn_S2NPowCoeffGFTransform} 
for $k \in \mathbb{Z} \setminus \mathbb{N}$. 

There are a plethora of additional harmonic number identities and 
recurrence relations that are derived from the identities given in 
Section \ref{Section_InitProps_OGFs_S1Relations_of_GenCoeffs}. 
We may also use the binomial transform with the new polylogarithm function 
series in \eqref{eqn_LikFn_S2Star_gen_series_exp_def} 
to give new proofs of well--known functional equations satisfied by the 
dilogarithm and trilogarithm functions. 
Since the truncated polylogarithm series and ordinary harmonic number 
generating functions are always rational, we may adapt these 
generalized series expansions to enumerate \textit{Euler--sum--like} 
series with weighted coefficients of the form 
$H_n^{(\pi_1)} \cdots H_n^{(\pi_k)} / n^s$ \citep[\S 6.3]{ECV2}. 

\subsection*{Generalizations} 

The interpretation of the transformation coefficients in
\eqref{eqn_S2StarCfRecRelation} as the finite sum in 
\eqref{eqn_S2Star_coeff_closed-form_sum_ident} 
motivates several generalizations briefly outlined below. 
For example, given any non--zero sequence, $\langle f(n) \rangle$, we 
may define a formal series transformation for the corresponding sums 
\begin{equation*} 
\sum_{n \geq 1} \frac{g_n}{f(n)^k} z^n = \sum_{j \geq 1} 
     \gkpSII{k+2}{j}_{f,\ast} z^j G^{(j)}(z), 
\end{equation*} 
where the modified coefficients, $\gkpSII{k+2}{j}_{f,\ast}$, 
are given by the sum 
\begin{equation*}
\gkpSII{k+2}{j}_{f,\ast} = \frac{1}{j!} \sum_{m=1}^{j} 
     \binom{j}{m} \frac{(-1)^{j-m}}{f(m)^{k}}. 
\end{equation*}
When $f(n) = \alpha n + \beta$, we can derive a number of similar identities 
to the relations established in this article in terms of the partial sums 
of the modified Hurwitz zeta function, $\Phi(z, s, \alpha, \beta)$ 
\citep[\cf \S 3]{FS-NORLUND-RICE-INTS}. 

Suppose that $p/q \in \mathbb{Q}^{+}$ and let the 
rational--order series transformation with respect to $z$ be defined as 
\begin{equation} 
\notag 
\QT_{p/q}\left[F(z)\right] := \sum_{n=0}^{\infty} n^{p/q} \cdot 
     [z^n] F(z) \cdot z^n. 
\end{equation} 
One topic for further exploration is generalizing the first transformation 
involving the Stirling numbers of the second kind in 
\eqref{eqn_S2NPowCoeffGFTransform} to analogous finite sum expansions that 
generate the positive rational--order series in the previous equation. 
If $|z| < 1$, the function $\Li_{s+1}(z)$ is given by 
\begin{equation} 
\notag 
\Li_{s+1}(z) = \frac{z \cdot (-1)^{s}}{s!} \int_0^1 
     \frac{\Log(t)^{s}}{(1-tz)}\ dt, 
\end{equation} 
which is evaluated termwise with respect to $z$ as 
\cite[eq. (4); \S 2]{EXPLICIT-EVAL-ESUMS} 
\begin{equation} 
\notag 
\frac{1}{n^{s+1}} = \frac{(-1)^{s}}{s!} \int_0^1 
     t^{n-1} \cdot \Log(t)^{s}\ dt. 
\end{equation} 
Other possible approaches to formulating the transformations of these series 
include considering series involving \textit{fractional derivatives} 
described briefly in the references \citep[\S 1.15(vi)--(vii)]{NISTHB}. 
Another alternate approach is to consider a shifted series in 
powers $(n \pm 1)^{p/q}$ 
that then employs an expansion over non--negative integral powers of $n$ 
with coefficients in terms of binomial coefficients, 
though the resulting transformations in this case are no longer 
formulated as finite sums as in the formula from 
\eqref{eqn_S2NPowCoeffGFTransform} 
when the exponent of $p/q$ assumes values over the 
non--integer, positive rational numbers. 


\vskip 0.1in 
\textbf{\hrule}\bigskip 
\vskip 0.1in 

\renewcommand{\refname}{References} 
\bibliographystyle{abbrv-mod} 

\end{document}